\patchcmd{\upbracefill}{\m@th}{\scriptstyle\m@th}{}{}
\patchcmd{\upbracefill}{$\braceld$}{$\scriptstyle\braceld$}{}{}
\patchcmd{\upbracefill}{\bracelu}{\bracelu\mkern-1mu}{}{}
\patchcmd{\upbracefill}{\hfill\braceru}{\hfill\mkern-1mu\braceru}{}{}
\def\sttf2#1#2{\left[\!\!\left[#1\atop#2\right]\!\!\right]} 
\def\stss2#1#2{\left\{\!\!\left\{#1\atop#2\right\}\!\!\right\}}
\DeclareMathAlphabet{\mathcalligra}{T1}{calligra}{m}{n}
\DeclareMathAlphabet{\mathgoth}{OT1}{goth}{m}{n}
\newtheorem{theorem}{Theorem}[section]
\newtheorem{definition}[theorem]{Definition}
\newtheorem{thm}[theorem]{Theorem}
\newtheorem{prop}[theorem]{Proposition}
\newtheorem{example}[theorem]{Example}
\newtheorem{cor}[theorem]{Corollary}
\newtheorem{lemma}[theorem]{Lemma}
\newtheorem{remark}[theorem]{Remark}
\title{Combinatorics of $q,r$-analogues of Stirling numbers of type $B$}
\author{
Eli Bagno\\
\small Jerusalem College of Technology\\[-0.8ex]
\small 21 HaVaad HaLeumi St.\\[-0.8ex]
\small Jerusalem, Israel\\[-0.8ex]
\small and \\[-0.8ex]
\small Michlalah College Jerusalem\\[-0.8ex]
\small 36 Barukh Duvdevani St.,\\[-0.8ex] \small Jerusalem, Israel\\[-0.8ex]
\small
\texttt{bagnoe@g.jct.ac.il}\\\vspace{-7pt}\\ David Garber\\
\small Holon Institute of Technology\\[-0.8ex]
\small 52 Golomb St., P.O.Box 305\\[-0.8ex]
\small 5810201 Holon, Israel\\[-0.8ex]
\small \texttt{garber@hit.ac.il}
}
\date{
}
\begin{document}

\maketitle

\begin{abstract}
Stirling number of the first and the second kinds have seen many generalizations and applications in various areas of mathematics. We introduce some combinatorial parameters which realize $q$-analogues and Broder's $r$-variants of Stirling numbers of type $B$ of both kinds, which count signed set partitions and signed permutations respectively.
Applications to orthogonality relations and power sums are given. 
\end{abstract}

\section{Introduction}

Stirling numbers constitute a family of mathematical sequences that appear in various combinatorial problems. Stirling numbers of the first kind count the number of permutations of $n$ elements into $k$ disjoint cycles, while 
Stirling numbers of the second kind count the number of ways to divide a set of $n$ objects into $k$ non-empty subsets (see e.g. Stanley \cite[page 81]{EC1}).

Stirling numbers of both kinds arise in a variety of problems  in enumerative combinatorics; 
they have many combinatorial interpretations and have been generalized in various contexts and in different ways.

The (unsigned) Stirling numbers of the first kind, denoted by  $s(n,k)$, can be also defined by the following identity, which grants them the role of the entries of the transition matrix from the standard basis of the polynomial ring to its basis of falling polynomials: 
\begin{equation}\label{first kind def}
t(t+1)\cdots (t+n-1)=\sum\limits_{k=1}^n s(n,k)\cdot t^k.
\end{equation}

These numbers can be also defined via the following recursion:
$$s(n,k)=s(n-1,k-1)+(n-1)s(n-1,k).$$

\medskip

The Stirling numbers of the first kind also have a signed version $(-1)^k s(n,k)$
and the inverse of the matrix $\{ (-1)^k s(n,k)\}_{n,k}$ constitutes the Stirling numbers of the second kind, denoted by $S(n,k)$. One concludes that 
\begin{equation}\label{second kind def}
t^n=\sum\limits_{k=1}^n S(n,k)\cdot t(t-1)\cdots (t-k+1).
\end{equation}
Another way to define these numbers is via the following recursion:
$$S(n,k)=S(n-1,k-1)+k S(n-1,k).$$
 
Broder \cite{Broder} (see also \cite{d'Ocagne}) defined an $r$-version to both kinds of Stirling numbers, which counts set partitions such that the first $r$ elements are placed in $r$ distinguished parts in the case of the second kind platform, and permutations of $[n]$ which are decomposed in $k$ cycles such that the elements $1,\dots,r$ are in distinguished cycles in the case of the first kind platform. 
An excellent source that deals with this type of generalization of Stirling numbers is Mez\H{o}'s book \cite{Mezo}.

\medskip

A nice way to present set partitions is by restricted growth words, introduced by Hutchinson \cite{Hut}, and first called as such by Milne \cite{Milne}, see Definition \ref{RG-words} below. In this paper, we chose to adopt this view of Stirling numbers of the second kind. Moreover, we take this approach further to the Stirling number of the first kind. A similar presentation using the cycle decomposition of permutations in $S_n$ was contributed to us by Sagan \cite{Sagan}. 

 \medskip
 
Bala \cite{Bala} presented a generalization of the Stirling numbers of both kinds to the framework of Coxeter groups of type $B$, a.k.a. the group of signed permutations.

The algebraic definition of the Stirling numbers of the first kind, denoted by $s^B(n,k)$, as transition coefficients between bases of $\mathbb{R}_n[t]$ is as follows: 
\begin{equation}\label{generalized first kind def}
(t+1)(t+3)\cdots (t+(2n-1))=\sum\limits_{k=0}^n s^B(n,k) \cdot t^k.
\end{equation}

Combinatorially, it is known that $s^B(n,k)$ 
counts the signed permutations having $k$ non-split cycles (see Section \ref{sec signed permutation} below). 
In \cite{monthly}, we have shown a simple bijection that demonstrates this fact. 

The notion of Stirling numbers of type $B$ of the second kind, denoted by $S^B(n,k)$, is mentioned implicitly by Reiner \cite{R}, who introduced the notion of signed set partitions. 
The algebraic definition of $S^B(n,k)$ is given by the following equality: 
$$t^n= \sum\limits_{k = 0}^n S^B(n,k)\cdot (t-1)(t-3) \cdots (t-(2k-1)).$$

\medskip

This paper deals with $q$-analogues of Stirling numbers of both types in the context of signed set partitions and signed permutations, mainly from the combinatorial viewpoint. We also deal with Broder's $r$-variant of the Stirling numbers of both kinds. 

We present some new parameters which realize the $q$- and $q,r$-analogues we work with, mainly on signed permutations and on signed set partitions, and also on the ordinary  permutations. These realizations are different from the ones presented by Sagan and Swanson \cite{SaSw}.  Section 1.1 of their paper contains a summary of what is already known in this realm. 

We also introduce an additional variant of the parameter defined over signed permutations mentioned above, and we use it to derive some new results regarding orthogonality relations and sum of powers related to these $q$-variants of Stirling numbers for signed permutations and set partitions.       

\medskip

The paper is organized as follows. In Section \ref{Background}, we provide some background on restricted growth words and on the group of signed permutations, otherwise known as the Coxeter group of type $B$. The definitions of Stirling numbers of type $B$ of both kinds can be found in that section. 

Section \ref{sec q,r analogue second kind} deals with the definitions of restricted growth words for signed set partitions and of a new parameter that realizes both $q$- and $q,r$-analogues of the Stirling numbers of type $B$ of the second kind.  

In Section \ref{sec q,r analogue first kind}, we provide the definition of restricted growth words for ordinary permutations and define a new inversion number on the set of restricted growth words. We use it to obtain a realization of the $q$- as well as the $q,r$-analogues of Stirling numbers of the first kind. Next, we generalize these definitions and realizations to the framework of signed permutations. The parameter defined in this context can be considered as a flag statistic (see e.g. \cite{AR}) and thus we denote it finv. 

In Section \ref{sec additional variant finv}, we suggest a variant of the parameter finv defined in the previous section, which in some sense looks more natural. We also give a decomposition of an inversion number on signed permutations defined by Sagan and Swanson \cite{SaSw} into a sum of parameters, which can be considered as a presentation of a flag statistic. 

Section \ref{sec orthogonality} is devoted to orthogonality and power sum properties arising from Stirling numbers of type $B$ of both kinds. For proving these properties, we make use of specialization of symmetric polynomials.

\section{Background}\label{Background}

\subsection{Restricted growth words for Stirling number of type {\it A} of the second kind}

We start by recalling the definition of {\it restricted growth words} for type $A$ (see e.g. the book of E\u{g}ecio\u{g}lu and  Garsia \cite[Sec. 1.7]{EgGa}): 

\begin{definition}\label{RG-words}
The word $\omega=x_1 \cdots x_n$ over the  alphabet $\{1 , \dots , n\}$ is called a {\em restricted growth (RG)-word}  if $x_1=1$ and for each $2\leq t \leq n$ one has: $$x_t \leq \max\left\{x_1,\dots ,x_{t-1}\right\}+1.$$ 
\end{definition}
For example, $\omega=122123$ is an RG-word, but $\omega=1\mathbf{4}213$ is not. 

\medskip

There is a well-known bijection (see Milne \cite{Milne}) between restricted growth words and set partitions of the set $\{1,\dots, n \}$, which we briefly describe here. 
Let $P=\{ B_1 , \dots ,B_k\}$ be a set partition of $\{1,\dots,n\}$, whose  blocks are ordered in such a way that the set of minimum elements of the blocks is increasing.

For such a set partition, we associate an RG-word $\omega=x_1 \cdots x_n$ as follows:  
$x_j$ is the number of the block where $j$ is located. For instance, the RG-word $\omega=122123$ is matched with the set partition $\{\{\mathbf{1},4\},\{\mathbf{2},3,5\},\{\mathbf{6}\}\}$ of the set $\{1,\dots, 6\}$; note that the blocks are arranged in such a way that their minimal elements are ordered increasingly.

\subsection{Set partitions of type 
{\it B}} 
We now recall the definition of set partitions of type $B$ (see Dolgachev-Lunts \cite[p.~755]{DoLu} and Reiner \cite[Section 2]{R}; mentioned implicitly in Dowling \cite{Dow} and Zaslavsky \cite{Za} in the form of signed graphs):

\begin{definition}\label{def of type B par}
Denote: $[\pm n]:=\{\pm1,\dots,\pm n\}$. A {\em set partition of $[n]$ of type $B$} or a {\em signed set partition} is a set partition of  the set $[\pm n]$  such that the following conditions are satisfied:
\begin{itemize}
\item If $B$ appears as a block in the set partition, then $-B$ (which is obtained from $B$ by negating all its elements) also appears in that partition.
\item There exists at most one block satisfying $-B=B$. This block is called the {\em zero block} (if it  exists, it is a subset of $[\pm n]$ of the form $\{\pm i \mid i \in C\}$ for some $C \subseteq [n]$).
\end{itemize}
\end{definition}

For example, the following is a set partition of $[6]$ of type $B$:
\begin{equation*}\label{example of standard presentation}
\{\{1,-1,4,-4\},\{2,3,-5\},\{-2,-3,5\},\{6\},\{-6\}\}.    
\end{equation*}

Note that every non-zero block $B$ has a corresponding block $-B$ attached to it. For the sake of convenience, we write for the pair of blocks $B,-B$, only the representative block containing the minimal {\it positive} number appearing in $B \cup -B$. 
For example, the pair of blocks\break
$B=\{-2,-3,5\},-B=\{2,3,-5\}$ will be represented by the single block $\{\mathbf{2},3,-5\}$. 

Our convention will be to write first the zero block and denote it by $B_0$ if exists and then the non-zero blocks of a set partition of type $B$ in such a way that the sequence of absolute values of the minimal elements of the blocks is increasing. We call this the {\it standard presentation}. 

For example, the following is a set partition of $[6]$ of type $B$ in its standard presentation: 
$$\left\{B_0=\{1,-1,4,-4\},B_1=\{\mathbf{2},3,-5\},B_2=\{\mathbf{6}\}\right\}.$$

\begin{definition} \label{definition of Stirling number of type B second kind}
Let $S^B(n,k)$ be the number of set partitions of type $B$ having $k$ representative non-zero blocks. This is known as {\em the Stirling number of type $B$ of the second kind} (see sequence A085483 in OEIS \cite{OEIS}).
\end{definition}

It is easy to see that $S^B(n,n)=S^B(n,0)=1$ for each $n\geq 0$.  The following recursion for $S^B(n,k)$ is well-known (\cite[Theorem 7; see the Erratum]{Dow}, \cite[Corollary 3]{Beno}, for $m=2$, and \cite[Equation (1)]{Wa}, for $m=2,c=1$):

\begin{prop}\label{recursion theorem}
For each $1 \leq k < n$,
\begin{equation}
S^B(n,k)=S^B(n-1,k-1)+(2k+1)S^B(n-1,k).
\label{recursion_second_kind_type_b}
\end{equation}
\end{prop}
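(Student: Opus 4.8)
The plan is to prove the recursion bijectively, by classifying each type $B$ set partition $P$ of $[n]$ with $k$ representative non-zero blocks according to the location of the pair $n, -n$, and then showing that the two resulting families are counted by the two terms on the right-hand side of \eqref{recursion_second_kind_type_b}. First I would remove the pair $n, -n$ from $P$, producing a type $B$ set partition $P'$ of $[n-1]$, and track how the number of representative non-zero blocks changes under this deletion.

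The argument splits into two cases. In the first case, $\{n\}$ (and hence $\{-n\}$) is a singleton block of $P$; then $\{n\}$ is its own representative block, and deleting $n, -n$ yields a type $B$ set partition of $[n-1]$ with exactly $k-1$ representative non-zero blocks. Since $\{n\}, \{-n\}$ can be appended as new singleton blocks to any such partition, this case is in bijection with the objects counted by $S^B(n-1,k-1)$. In the second case, $n$ lies in a block together with other elements (possibly the zero block); deleting $n, -n$ does not change the number of representative non-zero blocks, so $P'$ still has $k$ of them. To reverse this, I would count the ways to reinsert $n$ and $-n$ into a fixed $P'$ having $k$ representative non-zero blocks while keeping exactly $k$ representatives: writing the non-zero blocks of $P'$ as $B_1, \dots, B_k$ together with their negatives $-B_1, \dots, -B_k$, one may place $n$ into some $B_i$ (forcing $-n$ into $-B_i$), giving $k$ options; or place $n$ into some $-B_i$ (forcing $-n$ into $B_i$), giving another $k$ options; or place both $n$ and $-n$ into the zero block, giving one option. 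This totals $2k+1$ insertions, each producing a distinct type $B$ set partition with $k$ representative non-zero blocks, which yields the term $(2k+1)S^B(n-1,k)$.

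The step I expect to require the most care is the enumeration of the $2k+1$ insertions, since it is precisely here that the type $B$ symmetry constraints come into play. Two points deserve attention. First, placing $n$ in $B_i$ versus placing $n$ in $-B_i$ gives genuinely different partitions (the block containing $n$ differs), and this is what produces $2k$ options rather than merely $k$. Second, the zero-block option must be handled uniformly whether or not $P'$ already possesses a zero block: if it does, we enlarge it to $B_0 \cup \{n,-n\}$, and if it does not, the new block $\{n,-n\}$ is itself a legitimate zero block, being fixed under negation; in either case this contributes exactly one insertion. Finally I would verify that the singleton and insertion constructions are mutually inverse to the corresponding deletions, confirming that the two cases partition the objects counted by $S^B(n,k)$ and thereby establishing \eqref{recursion_second_kind_type_b}.
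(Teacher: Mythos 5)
Your proof is correct and is in essence the paper's own argument: the paper derives this proposition as the $q=1$ specialization of Proposition \ref{recursion q version second kind}, whose proof deletes the last letter $\omega_n$ of an RG-word of type $B$ of the second kind, which under the partition/RG-word bijection is exactly your deletion of the pair $n,-n$, and your $2k+1$ reinsertion options (into some $B_i$, into some $-B_i$, or into the zero block) correspond precisely to the $2k+1$ admissible values $\omega_n\in\{0,\pm 1,\dots,\pm k\}$. The only difference is that you argue directly on signed set partitions and omit the $q$-weights, which changes nothing at $q=1$.
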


Proposition \ref{recursion theorem} is a special case of Proposition \ref{recursion q version second kind}, which will be proved in the next section.

\subsection{The group of signed permutations}\label{sec signed permutation}

The definition of the group of signed permutations is as follows:

\begin{definition}
A {\em signed permutation} is a bijective function: 
$\pi: [\pm n] \rightarrow [\pm n],$ satisfying:\break $\pi(-i)=-\pi(i)$ for all  $1 \leq i \leq n$. 
{\em The group of signed permutations of the set $[\pm n]$} (with respect to composition of functions), also known as the {\em hyperoctahedral group} or the {\em Coxeter group of type $B$}, is denoted by $B_n$. 
\end{definition}

\begin{example}\label{exam cycles}
Here is an example of a signed permutation: 
$$\pi=\left(\begin{array}{ccccc|ccccc}
-5 & -4 & -3 & -2 &-1  & 1 & 2 & 3 &  4 & 5\\
4 & -5 & 1  & -2  & 3 & -3  &2 &-1 & 5 & -4\end{array}\right) \in B_5.$$
Note that it is sufficient to know the values of $\pi(1),\dots,\pi(n)$, so that the signed permutation $\pi$ above can also be written (in window notation) as: $$\pi=[-3,2,-1,5,-4].$$ 
\end{example}

Considering $B_n$ as a subgroup of $S_{2n}$ in the natural way, we can also write every signed permutation as a product of disjoint signed cycles. 
We consider the pairs of cycles $C$ and $-C=\{-x\ | \ x \in C\} $ (if $-C \neq C$) as one unit, i.e. although $C$ and $-C$ are two disjoint cycles, we consider them as two parts of the same cycle (since they act on the same set of absolute values). For the uniqueness of the presentation, we use the convention that the minimal {\it positive} element of the pair of cycles $C$ and $-C$ appears in $C$.  

We distinguish between two types of cycles: a cycle $C$ will be called a {\it non-split cycle} (or an {\it odd cycle}) if the following condition holds: ``$i \in C$ if and only if $-i \in C$'', and will be called a {\it split cycle} (or an {\it even cycle}) otherwise. The names {\it odd} (and {\it even}, respectively) are coined according to the number of negative elements in the image of $\{1,\dots,n\}$ under that cycle. 

A signed permutation, written as a sequence of disjoint cycles, is presented in {\it  standard form} if 
its signed cycles are ordered in such a way that the sequence composed by the smallest absolute values of the elements of each cycle increases. Moreover, if $C\neq -C$, we write the cycle $C$ before the cycle $-C$.

\begin{example}\label{exam cycles1}
The signed permutation appearing in Example \ref{exam cycles} can be written as a product of disjoint cycles in standard form as follows (the smallest positive elements of each cycle are marked): 
$$\pi=({\mathbf 1},-3 ) (3,-1) ({\mathbf 2}) (-2) ({\mathbf 4}, 5,-4,-5).$$ 
Its split cycles are $(1,-3)(3,-1)$ and $(2)(-2)$, as the images of 
$\{1,2,3,4,5\}$ under the cycles $(1,-3)(3,-1)$ and $(2)(-2)$  are $\{-1,-3\}$ and $\{2\}$, respectively, so the number of negative elements in each cycle is even.
On the other hand, its non-split (or odd) cycle is $(4, 5,-4,-5)$, and the image of 
$\{1,2,3,4,5\}$ by this cycle has one negative element $-4$.
\end{example}

\begin{definition} \label{definition of Stirling number of type B first kind}
Let $s^B(n,k)$ be the number of sign permutations of type $B$ having $k$ non-split cycles. This is known as {\em the Stirling number of type $B$ of the first kind} (see sequence A132393 in OEIS \cite{OEIS}).
\end{definition}

It is easy to see that $s^B(n,n)=s^B(n,0)=1$ for each $n\geq 0$.  The following recursion for $s^B(n,k)$ is well-known, although we have not found any explicit proof:  

\begin{prop}\label{recursion theorem first kind}
For each $1 \leq k < n$,
the Stirling number of the first kind $s^B(n,k)$ satisfies the following recursion:
\begin{equation}
s^B(n,k)=s^B(n-1,k-1)+(2n-1)s^B(n-1,k).
\label{recursion_first_kind_type_b}
\end{equation}
\end{prop}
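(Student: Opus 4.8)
The plan is to prove the recursion bijectively, by tracking the largest absolute value $n$ in the signed cycle decomposition; this is the type-$B$ analogue of the classical proof of the type-$A$ recursion $s(n,k)=s(n-1,k-1)+(n-1)s(n-1,k)$, adjusted to account for the signs. By Definition \ref{definition of Stirling number of type B first kind}, $s^B(n,k)$ is the number of $\pi\in B_n$ whose disjoint signed cycle decomposition has exactly $k$ non-split cycles, where a cycle $C$ is non-split when $C=-C$, and split cycles occur in pairs $(C,-C)$ with $C\neq -C$. The key tool is a deletion operation: given $\pi\in B_n$, set $a=\pi^{-1}(n)$ and $b=\pi(n)$, remove the pair $\{n,-n\}$, and redirect $a\mapsto b$ (and, forced by $\pi(-i)=-\pi(i)$, $-a\mapsto -b$); this yields a signed permutation $\sigma\in B_{n-1}$.

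First I would split the $\pi\in B_n$ into three cases according to the value $\pi(n)$. Case (i): $\pi(n)=-n$, so that $(n,-n)$ is a non-split cycle; deleting it leaves $\sigma\in B_{n-1}$ with $k-1$ non-split cycles, and conversely adjoining the cycle $(n,-n)$ to any such $\sigma$ recovers a unique $\pi$, accounting for the term $s^B(n-1,k-1)$. Case (ii): $\pi(n)=n$, so that $(n)(-n)$ is a split pair; deleting it leaves $\sigma\in B_{n-1}$ with the same number $k$ of non-split cycles. Case (iii): $n$ lies in a longer cycle, so $a=\pi^{-1}(n)\in[\pm(n-1)]$, and $\pi$ is obtained from $\sigma$ by inserting $n$ immediately after $a$ (and $-n$ after $-a$).

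The heart of the argument, which I expect to be the main point to verify, is that every insertion in Cases (ii) and (iii) preserves the number of non-split cycles, so that these insertions are weight-preserving and can be counted directly. The fixed-point insertion $(n)(-n)$ of Case (ii) contributes one choice. For Case (iii), the position $a$ ranges over all $2(n-1)=2n-2$ elements of $[\pm(n-1)]$, and I would check that inserting $n$ after $a$ into a split cycle keeps it split (since $n$ enters $C$ but not $-C$), while inserting $n$ after $a$ into a non-split cycle $C=-C$ keeps it non-split (both $n$ and $-n$ are adjoined symmetrically, so the enlarged cycle again equals its own negation). Hence, together, the $2n-2$ insertions of type (iii) and the single insertion of type (ii) produce exactly $2n-1$ signed permutations with $k$ non-split cycles restricting to a fixed $\sigma\in B_{n-1}$ that itself has $k$ non-split cycles, yielding the term $(2n-1)s^B(n-1,k)$.

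Finally I would confirm that deletion and insertion are mutually inverse, so the three cases assemble into a bijection: the cases are distinguished by whether $\pi(n)$ equals $-n$, equals $n$, or is neither, and within Case (iii) the insertion position is recovered as $a=\pi^{-1}(n)$, which secures injectivity. Summing the three contributions gives $s^B(n,k)=s^B(n-1,k-1)+(2n-1)s^B(n-1,k)$. (A shorter algebraic route is also available: starting from the transition-coefficient characterization \eqref{generalized first kind def}, the factorization $\prod_{j=1}^n (t+(2j-1))=(t+(2n-1))\prod_{j=1}^{n-1}(t+(2j-1))$ yields the recursion immediately upon comparing coefficients of $t^k$; but since $s^B(n,k)$ is defined here combinatorially, I would present the insertion–deletion proof and only remark on this algebraic shortcut.)
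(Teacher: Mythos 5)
Your proof is correct, and it supplies something the paper deliberately omits: the paper gives no standalone proof of this proposition, remarking only that it is the $q=1$ case of Proposition \ref{recursion q version}, which is proved later via restricted growth words of type $B$ of the first kind. Your decomposition coincides with the one used there — your case (i) ($n$ in its own non-split cycle) is the paper's case (a), producing $s^B(n-1,k-1)$; your case (ii) (the split pair $(n)(-n)$) is the paper's case (b)(1), producing the summand $1$; and your case (iii) (inserting $n$ after any of the $2n-2$ elements $a\in[\pm(n-1)]$) is the paper's case (b)(2) — so your count $2n-1=1+(2n-2)$ is exactly the $q=1$ evaluation of the paper's coefficient $1+[2n-2]_q$. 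The difference is one of packaging: you argue directly on signed permutations with no statistic to track, which yields precisely the explicit elementary proof that the paper says is ``well-known, although we have not found any explicit proof''; the paper instead runs the same insertion/deletion argument through the RG-word encoding so that it simultaneously proves the stronger $q$-refinement weighted by the flag-inversion statistic, and then specializes. Your key verification — that inserting $n$ after $a$ (and $-n$ after $-a$) keeps split cycles split and non-split cycles non-split, so the number of non-split cycles is preserved in cases (ii) and (iii) — is argued correctly, and recovering the insertion position as $a=\pi^{-1}(n)$ does secure bijectivity. Your parenthetical algebraic shortcut via Equation (\ref{generalized first kind def}) is also valid, with the caveat you already flag: it presupposes the equivalence of the algebraic and combinatorial definitions of $s^B(n,k)$, which the paper cites to earlier work rather than proves here.
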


Note that the above proposition is a special case of Theorem \ref{recursion q version} in the sequel.

\section{A {\it q,r}-analogue of the Stirling number of the second kind}
\label{sec q,r analogue second kind}

This section deals with the definitions of restricted growth words for signed set partitions and of a new parameter that realizes both $q$- and $q,r$-analogues of the Stirling numbers of type $B$ of the second kind.
Some of the material appearing in this section partially overlaps our previous paper \cite{PUMA}, which contained only partial proofs. 

\subsection{The {\it r}-variant}
We present here a new generalization of the Stirling number of type $B$ of the second kind, based on the work of Broder \cite{Broder} for type $A$. The definition of the {\it $r$-Stirling number of type $B$ of the second kind} is as follows:

\begin{definition}\label{special elements}
Let $S^B(n,k,r)$ be the number of  set partitions of $[n]$ of type $B$ into $k$ non-zero blocks such that no two elements of $\{1,\dots,r\}$ are located in the same block. 
\end{definition}

For example, the following is a signed set partition counted by $S^B(7,3,2)$:
$$\{\{4,-4\},\{\mathbf{1},3,-5\},\{-1,-3,5\},\{\mathbf{2},6\},\{-2,-6\},\{\mathbf{7}\},\{-7\}\}.$$

Note that the case $r=0$ in Definition \ref{special elements} brings us back to the definition of the Stirling number of type $B$ of the second kind given in Definition \ref{definition of Stirling number of type B second kind}. 

The recursion for the $r$-Stirling numbers of type $B$ is identical to the one given in Proposition \ref{recursion theorem}, where the only differences are in the initial conditions, as the following proposition claims:

\begin{prop}\label{recursion r theorem second kind}
If $n<r$, then $S^B(n,k,r)=0$. If $n=r$, then $S^B(n,k,r)=\delta_{kr}$. If $n>r$, then: $$S^B(n,k,r)=S^B(n-1,k-1,r)+(2k+1)S^B(n-1,k,r).$$ 
\end{prop}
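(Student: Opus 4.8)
The plan is to prove the three cases separately, treating $n<r$ and $n=r$ as base cases by direct enumeration and then establishing the recursion for $n>r$ by the same ``remove the largest element'' argument that underlies Proposition~\ref{recursion theorem} (the case $r=0$). Throughout, I view a signed set partition through its standard presentation, so that it consists of a possible zero block $B_0$ together with $k$ representative non-zero blocks $B_1,\dots,B_k$, and I read the condition of Definition~\ref{special elements} as requiring the distinguished elements $1,\dots,r$ to occupy $r$ \emph{distinct} non-zero representative blocks (in particular, none of them lies in the zero block).

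For $n<r$, the ground set $[\pm n]$ contains only the positive elements $1,\dots,n$, so the distinguished set $\{1,\dots,r\}$ cannot be accommodated at all; no partition is counted and $S^B(n,k,r)=0$. For $n=r$, I would argue that the separation condition forces the all-singletons partition. Each representative block carries its minimal positive element, so $k$ representative blocks consume at least $k$ distinct positive elements, giving $k\le r$; since the $r$ distinguished elements must sit in $r$ distinct representative blocks, we also get $k\ge r$, whence $k=r$, each representative block holds exactly one positive element, and the zero block holds none. Finally, a representative block containing a positive $p$ cannot also contain a negative $-j$ with $1\le j\le r$, for then the pair $\{B,-B\}$ would house both $p$ and $j$, contradicting distinctness; as every negative element has this form when $n=r$, each representative block must be the singleton $\{p\}$. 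Thus the unique partition is $\{\{1\},\{-1\},\dots,\{r\},\{-r\}\}$, and $S^B(r,k,r)=\delta_{kr}$.

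For the recursion $n>r$, I would classify each counted partition of $[\pm n]$ by the block containing $n$. If $\{n\}$ is a singleton representative block, deleting the pair $\{n\},\{-n\}$ yields a counted partition of $[\pm(n-1)]$ with $k-1$ representative blocks, and this is a bijection onto the objects enumerated by $S^B(n-1,k-1,r)$. Otherwise $n$ lies in a larger block, and deleting $n$ and $-n$ yields a counted partition of $[\pm(n-1)]$ with $k$ representative blocks; conversely, $n$ may be reinserted by placing it (with $-n$ forced into the mirror slot) into one of the $k$ representative blocks $B_i$, into one of the $k$ mirror blocks $-B_i$, or into the zero block (creating one if none exists), for a total of $2k+1$ choices. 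This produces the term $(2k+1)S^B(n-1,k,r)$ and completes the recursion.

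The point that makes this recursion ``identical'' to the $r=0$ case is that $n>r$ forces $n\notin\{1,\dots,r\}$: deleting $n$ never merges two distinguished elements and reinserting it never places a distinguished element into a forbidden block, so the separation condition is preserved in both directions of the bijection. I expect the only delicate step to be the sign bookkeeping in the insertion count, namely verifying that inserting $n$ into $B_i$ and into $-B_i$ yield genuinely distinct partitions (they do, since $B_i\neq -B_i$ for a non-zero block) and that the zero block contributes exactly one further option, so that one obtains the factor $2k+1$ rather than $k+1$ or $2k$.
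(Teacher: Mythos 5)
Your proof is correct, and it is worth spelling out how it relates to the paper's route. The paper never proves this proposition directly: it is obtained as the $q=1$ specialization of Propositions \ref{recursion q version second kind} and \ref{recursion q r version second kind}, which are proved on weighted RG-words of type $B$ by deleting the last letter. Your delete-and-reinsert-$n$ argument is exactly that map read through the bijection between signed set partitions and RG-words ($\omega_n=0$ corresponds to placing $n$ in the zero block, $\omega_n=i>0$ to placing $n$ in $B_i$, $\omega_n=-i$ to placing $n$ in $-B_i$, and a new maximal letter to the singleton pair $\{n\},\{-n\}$), so the underlying decomposition is the same; the difference is that you argue directly on partitions, which is more elementary and self-contained, and you settle the cases $n<r$ and $n=r$ by direct enumeration, whereas the paper's detour through weighted RG-words buys the stronger $q$-refined statement in one stroke. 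One point in your favour deserves emphasis: you read Definition \ref{special elements} in the sharpened sense that $1,\dots,r$ must occupy pairwise distinct non-zero block pairs \emph{and} that none of them lies in the zero block. This sharpening is genuinely needed, not a harmless gloss: under the literal wording of Definition \ref{special elements}, the partition of $[\pm r]$ with zero block $\{1,-1\}$ and singleton pairs $\{j\},\{-j\}$ for $2\le j\le r$ would be counted, making $S^B(r,r-1,r)\neq 0$ and contradicting the claimed $\delta_{kr}$; the paper itself only makes the intended condition precise later, in the discussion following Definition \ref{def_RGrB}.
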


Proposition \ref{recursion r theorem second kind} is a special case of Proposition \ref{recursion q version second kind}, which will be proved in the next subsection.  

\subsection{Restricted growth words for set partitions of type {\it B} and their weights}

We are going to prove a $q$-analogue of Propositions \ref{recursion theorem} and \ref{recursion r theorem second kind}. To this end, we modify Definition \ref{RG-words}, to produce an extended version of restricted growth words, which will be in bijection with the set partitions of type $B$.

\begin{definition}\label{def_RGB}
Let $\Sigma_{(ii)}^B$ be the alphabet $\{0,\pm 1,\pm 2,\dots,\pm n\}$ and define the following order on $\Sigma_{(ii)}^B$:\\
\centerline{$0 \prec -1 \prec 1 \prec -2 \prec 2 \prec \cdots \prec -n \prec n. $}

\medskip

A {\em restricted growth (RG-)word of type $B$ of the second kind} is a word $\omega=\omega_1\cdots \omega_n$ in the alphabet $\Sigma_{(ii)}^B$ which satisfies the following conditions:
\begin{enumerate}
    \item[(1)] We have $\omega_1=0$ or $\omega_1=1$. 
    \item[(2)] For each $2 \leq t \leq n$, the following inequality holds: $$\omega_t\preceq \max\left\{|\omega_1|,\dots,|\omega_{t-1}|\right\}+1,$$ 
    with respect to the order defined above. \\ In the case that \begin{equation}\label{cond 2b}
    \omega_t= \max\left\{|\omega_1|,\dots,|\omega_{t-1}|\right\}+1,\end{equation}
    we demand: $\omega_t>0$.
\end{enumerate}
Denote by $R_{(ii)}^B(n,k)$ the set of RG-words of type $B$ of the second kind, such that the maximal element is $k$. 
\end{definition}

Now, let $P=\{B_0,B_1,\dots ,B_k\}$ be a set partition of $[n]$ of type $B$, written in its standard presentation (see after Definition \ref{def of type B par}). We associate $P$ with an RG-word $\omega=\omega_1 \cdots \omega_n$ of type $B$  as follows: for each $1 \leq j \leq n$, 
$\omega_j$ is the number of the representative block where $j$ or $-j$ appears.
If the element $j$ appears in the representative block, then $\omega_j$ is the number of the block containing $j$; otherwise $\omega_j$ will be the number of this block, with a negative sign.  
Note that if $j$ is the smallest element in its block (in absolute value), then it should appear in the representative block by definition, so that we demand that $\omega_j>0$ (as Equation (\ref{cond 2b}) requires).  

\begin{example}
Let 
$P=\{B_0=\{2,5,-2,-5\},\ B_1=\{\mathbf{1},-7 \},\ B_2=\{\mathbf{3}, -4,6\}\}$ be a set partition of the set $[7]$ of type $B$, given in its standard presentation. Then, its associated RG-word of type $B$ is: $\omega=(1,0,2,-2,0,2,-1).$

Note, on the other hand, that the following word:
$\omega'=(1,0,\mathbf{-2},2,0,-2,-1)$
is not an RG-word of type $B$, since the first appearance of $2$ is negative.
\end{example}

It is easy to see that this forms a bijection between 
the set of set partitions of $[n]$ of type $B$ having $k$ non-zero representative blocks and the set $R^B_{(ii)}(n,k)$.

\medskip

Motivated by Cai and Readdy \cite{CaRe}, who dealt with the Stirling number of the second kind for ordinary set partitions, we equip each RG-word of type $B$ (or, equivalently, each set partition of $[n]$ of type $B$) with a weight, which generates the $q$-Stirling numbers of type $B$ of the second kind. Note that this is the first appearance of a combinatorial interpretation of $q$-Stirling number of type $B$ of the second kind, except for the work of Sagan and Swanson \cite{SaSw}, who reached an equivalent definition independently, almost at the same time, using the explicit definition of set partitions of type $B$. 

\begin{definition}
\label{def q stirling second kind}
Let $\omega=\omega_1\cdots \omega_n\in R_{(ii)}^B(n,k)$. Define the {\em weight of $\omega$}, denoted ${\rm wt}(\omega)$, by 
$${\rm wt}(\omega)=\prod\limits_{i=1}^n {\rm wt}_i(\omega),$$ 
where ${\rm wt}_1(\omega)=1$ and for $2\leq i \leq n$, 
$${\rm wt}_i(\omega)=\left\{ \begin{array}{lll}
1 & &\omega_i>\max\{\omega_1,\dots, \omega_{i-1} \} \mbox{ {\rm or} } \omega_i=0  \\
q^{2 |\omega_i| -1}& & |\omega_i| \leq \max\{|\omega_1|,\dots, |\omega_{i-1}|\} \mbox{ {\rm and} }  \omega_i<0 \\
q^{2 |\omega_i|}& & |\omega_i| \leq \max\{|\omega_1|,\dots, |\omega_{i-1}|\} \mbox{ {\rm and} }  \omega_i>0 \\
\end{array}\right. $$

We also define the {\em $q$-Stirling number $S^B_q(n,k)$ of the second kind of type $B$} as follows:
$$S_q^B(n,k) := \sum\limits_{\omega \in R_{(ii)}^B(n,k)} {\rm wt} (\omega).$$
\end{definition}

From the definition,  it follows that for each $m \in \{\pm 1,\dots,\pm k\}$, the first occurrence of $|m|$ in $\omega$ has no contribution to the weight, but each of its next occurrences contributes $q^{2|m|-1}$ for $m<0$ and $q^{2|m|}$ for $m>0$. Moreover, the occurrences of $0$ in $\omega$ have no contribution to the weight. 

\medskip

By the bijection between the set $R_{(ii)}^B(n,k)$ and the set of set partitions of $[ n]$ of type $B$, we have:
$\left.S_q^B(n,k)\right|_{q=1}=S^B(n,k).$

\begin{example}
Given the following set partition of the set $[6]$ of type $B$, written in its standard presentation: $$\{B_0=\{2,-2\}, B_1=\{\mathbf{1},-3\},B_2=\{\mathbf{4},-5,6\}\},$$
its associated RG-word is: $\omega=(1,0,-1,2,-2,2)$, so we have: $${\rm wt} (\omega)= \underbrace{1}_{{\rm wt}_1} \cdot \underbrace{1}_{{\rm wt}_2} \cdot \underbrace{q}_{{\rm wt}_3} \cdot \underbrace{1}_{{\rm wt}_4} \cdot \underbrace{q^3}_{{\rm wt}_5} \cdot \underbrace{q^4}_{{\rm wt}_6}=q^8. $$
\end{example}

Now, we can prove combinatorially  a Stirling-type recursion for the $q$-Stirling number of type $B$ of the second kind:

\begin{prop}\label{recursion q version second kind}
For each $n \in \mathbb{N}$ and $1 \leq k< n$,
\begin{equation}
\label{recursion q second kind}
S^B_q(n,k)=S^B_q(n-1,k-1)+[2k+1]_q \cdot S^B_q(n-1,k),    
\end{equation}
with the boundary conditions: $S^B_q(n,0)=S^B_q(n,n)=1$, where $[2k+1]_q=1+q+\cdots+q^{2k}$.
\end{prop}

\begin{proof}
We start by verifying the boundary conditions. Note that $R_{(ii)}^B(n,0)$ consists of the single RG-word of type $B$ of the second kind $(0,0,\dots, 0)$ of length $n$, which corresponds to the set partition of type $B$: $\left\{B_0=\{1,-1,2,-2, \dots , n,-n\}\right\}$. 
Therefore:
$$ S^B_q(n,0)= \sum\limits_{\omega \in R_{(ii)}^B(n,0)} {\rm wt} (\omega)={\rm wt} \left((0,0,\dots,0)\right)=1.$$
Note also that $R_{(ii)}^B(n,n)$ consists of the single RG-word of type $B$ of the second kind $(1,2,\dots, n)$ of length $n$, which corresponds to the set partition of type $B$: $\left\{B_1=\{1\},B_2=\{2\},\dots, B_n=\{n\}\right\}$. 
Therefore:
$$ S^B_q(n,n)= \sum\limits_{\omega \in R_{(ii)}^B(n,n)} {\rm wt} (\omega)={\rm wt} \left((1,2,\dots n)\right)=1.$$

We now prove the recurrence relation.  Let 
$$f:R_{(ii)}^B(n,k) \rightarrow R_{(ii)}^B(n-1,k) \cupdot R_{(ii)}^B(n-1,k-1)$$ 
be the function defined by removing the last element, i.e.  $$f(\omega) =f(\omega_1\cdots\omega_n)=\omega_1\cdots\omega_{n-1}.$$     
For $\omega \in R_{(ii)}^B(n,k)$, we have exactly two possibilities: 
\begin{itemize}
\item If the maximal element $k$ of $\omega$ appears only once, at position $n$ (i.e. $\omega_n=k$ and $\omega_j<k$ for all $j<n$), then $f(\omega)\in R_{(ii)}^B(n-1,k-1)$. Actually, the RG-words satisfying this condition in $R_{(ii)}^B(n,k)$ are in bijection with $R_{(ii)}^B(n-1,k-1)$.  In this case, we have ${\rm wt}(f(\omega))={\rm wt}(\omega)$.
\item Otherwise, the maximal element $k$ appears in $\omega$ before position $n$,   i.e. $\omega_j=k$ for some $j<n$ (note that each element $1 \leq i < k$ should also appear before $\omega_n$).  Then: $f(\omega)\in R_{(ii)}^B(n-1,k)$. 
For each $f(\omega) =\omega_1 \cdots \omega_{n-1}\in R_{(ii)}^B(n-1,k),$ there are $2k+1$ possibilities for choosing the value of $\omega_n$, i.e. $\omega_n \in \{0,\pm 1,\dots, \pm k\}$. Each such possibility contributes $1,q,q^2,\dots, q^{2k}$ to the weight, respectively, giving in total $[2k+1]_q$. 
\end{itemize}

Summing up both possibilities yields the requested recursion.
\end{proof}

\subsection{The {\it q,r}-variant}
In order to establish a $q$-analogue for the $r$-variant Stirling number of type $B$ of the second kind, we define the following subset of $R_{(ii)}^B(n,k)$:

\begin{definition}\label{def_RGrB}
Let $R_{(ii)}^{B}(n,k,r)$ be the subset of $R_{(ii)}^B(n,k)$ consisting of all RG-words of type $B$ of the second kind satisfying that the first $r$ entries are $1,2,\dots,r$ in  increasing order.
\end{definition}

It is easy to see that the bijection defined above from $RG$-words of type $B$ of the second kind to set partitions of type $B$ restricts to a bijection from the set $R_{(ii)}^{B}(n,k,r)$ to the set partitions of $[n]$ of type $B$ having $k$ non-zero blocks, such that no two elements of the set $\{1,2,\dots, r\}$ are in the same non-zero block, and none of them appear in the zero block. Therefore, we define the
{\em $q,r$-Stirling number $S^B_q(n,k,r)$ of type $B$ of the second kind} as follows:
$$S^B_q(n,k,r) := \sum\limits_{\omega \in R_{(ii)}^{B}(n,k,r)} {\rm wt} (\omega).$$

Then, we have the following recurrence relation:
\begin{prop}\label{recursion q r version second kind}
For each $r \leq k< n$,
$$S^B_q(n,k,r)=S^B_q(n-1,k-1,r)+[2k+1]_q \cdot S^B_q(n-1,k,r),$$
with the boundary conditions:
$$S^B_q(r,r,r)=1 \qquad \mbox{ and } \qquad S^B_q(n,k,r)=0 \mbox{\ \  for \ \ } k<r,\ \  k>n \mbox{ or } n<r.$$
\end{prop}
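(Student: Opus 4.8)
The plan is to mirror the proof of Proposition \ref{recursion q version second kind}, now working inside the subset $R_{(ii)}^{B}(n,k,r)$, and to check that the removal-of-last-element map still splits the count correctly once the first $r$ positions are frozen to be $1,2,\dots,r$. First I would dispose of the boundary conditions. For $k<r$ the word cannot contain the initial block $1,2,\dots,r$ while having maximum $k<r$, so $R_{(ii)}^{B}(n,k,r)=\varnothing$ and $S^B_q(n,k,r)=0$; likewise $k>n$ is impossible since the maximum of a length-$n$ word is at most $n$, and $n<r$ leaves no room for the prescribed prefix, so both give $0$. For the base case $n=k=r$, the prefix $1,2,\dots,r$ already fills the whole word, forcing $\omega=(1,2,\dots,r)$, whose weight is $1$ (the first occurrence of each value contributes nothing), hence $S^B_q(r,r,r)=1$.

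Next I would establish the recurrence for $r\leq k<n$ by restricting the map $f(\omega_1\cdots\omega_n)=\omega_1\cdots\omega_{n-1}$ from the previous proof to $R_{(ii)}^{B}(n,k,r)$. The key observation is that deleting the \emph{last} letter $\omega_n$ does not disturb the frozen prefix, because $r\leq k<n$ guarantees $n>r$, so position $n$ lies strictly beyond the prescribed initial segment; therefore $f$ maps $R_{(ii)}^{B}(n,k,r)$ into $R_{(ii)}^{B}(n-1,k-1,r)\cupdot R_{(ii)}^{B}(n-1,k,r)$, and the resulting shorter word still begins with $1,2,\dots,r$. I would then reproduce the two-case analysis verbatim: if the maximal value $k$ occurs only at position $n$, then $f$ is a weight-preserving bijection onto $R_{(ii)}^{B}(n-1,k-1,r)$, contributing $S^B_q(n-1,k-1,r)$; otherwise $k$ already appears before position $n$, and for each fixed image $f(\omega)\in R_{(ii)}^{B}(n-1,k,r)$ the last letter $\omega_n$ ranges over $\{0,\pm1,\dots,\pm k\}$ with weights $1,q,q^2,\dots,q^{2k}$, contributing the factor $[2k+1]_q$ and hence $[2k+1]_q\cdot S^B_q(n-1,k,r)$. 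Summing the two cases yields the claimed recursion.

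The only genuinely new point, and the step I would watch most carefully, is verifying that the freezing of the prefix is compatible with the case split—specifically that the condition $r\leq k$ (rather than merely $k<n$) is exactly what is needed. One must confirm that every word in the target sets $R_{(ii)}^{B}(n-1,k-1,r)$ and $R_{(ii)}^{B}(n-1,k,r)$ is genuinely realizable as $f(\omega)$ for some admissible $\omega$, i.e. that appending the appropriate last letter never violates either the RG-word growth condition or the prefix constraint. Since $n-1\geq k\geq r$ in both target parameter ranges, the prefix $1,\dots,r$ fits inside a word of length $n-1$, so these sets are the correct (nonempty when relevant) fibers, and the bijections and weight bookkeeping go through unchanged. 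I do not expect any serious obstacle beyond this bookkeeping, as the weight function and the alphabet are identical to the unrestricted case; the restriction only prunes which words are allowed, and the deletion map respects that pruning precisely because the last coordinate is always outside the frozen prefix.
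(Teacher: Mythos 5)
Your proposal is correct and follows essentially the same route as the paper: the paper likewise declares the recurrence argument identical to that of Proposition \ref{recursion q version second kind} (the deletion-of-last-letter map with the two-case split giving $1$ and $[2k+1]_q$) and only verifies the boundary conditions, exactly as you do. Your extra care about the frozen prefix being undisturbed (since $n>r$) and the edge case $k=r$ (where the first case is vacuous, matching $S^B_q(n-1,r-1,r)=0$) is bookkeeping the paper leaves implicit, but it is the same proof.
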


\begin{proof}
The proof of the recurrence is identical to the proof of the parallel one in Proposition \ref{recursion q version second kind}, so we check here only the boundary conditions.
Note that $R_{(ii)}^{B}(r,r,r)$ consists of the single RG-word of type $B$ of the second kind $\omega=(1,2,\dots, r)$, which corresponds to the set partition of the set $[r]$ of type $B$: $\left\{B_1=\{1\},B_2=\{2\}, \dots , B_r=\{r\}\right\}$. 
Therefore:
$$S^B_q(r,r,r)= \sum\limits_{\omega \in R_{(ii)}^{B}(r,r,r)} {\rm wt} (\omega)={\rm wt} ((1,2,\dots,r))=1.$$
If $k<r$,\ \ \  $k>n$ or $n<r$, we have by definition $R_{(ii)}^{B}(n,k,r)=\emptyset$, and therefore the sum vanishes.  
\end{proof}

\section{Restricted growth words and {\it q,r}-Stirling numbers of the first kind}
\label{sec q,r analogue first kind}

In this section, we introduce an extension of the concept of restricted growth words, in order to count permutations according to their number of cycles, enumerated by the ordinary Stirling number of the first kind (Section \ref{sec type A first kind}). In section \ref{sec type B first kind}, we generalize this definition to 
the framework of signed permutations and the Stirling number of type $B$ of the first kind.

\subsection{Type {\it A}}\label{sec type A first kind}

\subsubsection{Restricted growth words for permutations}

We now define a version of {\it restricted growth words} representing permutations in $S_n$, using their cycle structure, introduced to us by Bruce Sagan \cite{Sagan}:  

\begin{definition}\label{definition of RGA}
Let $\Sigma_{(i)}^A$ be the alphabet $\{(i,j) \mid 1 \leq i,j \leq n\}$. A {\em restricted growth 
(RG-)word of type $A$ of the first kind} is a word $\omega=\omega_1\cdots \omega_n=(i_1,j_1) \cdots (i_n,j_n)$ in the alphabet $\Sigma^A_{(i)}$, which satisfies the following conditions:
\begin{enumerate}
    \item[(1)] We have $(i_1,j_1)=(1,1)$. 
    \item[(2)] For each $2 \leq t \leq n$, the following inequality holds: 
    \begin{equation}
    i_t \leq \max\left\{i_1,\dots,i_{t-1}\right\}+1,\end{equation} 
    \item[(3a)] If $i_t=\max\left\{i_1,\dots,i_{t-1}\right\}+1$,
    we have: $j_t=1$. 
    \item[(3b)] If $i_t \leq \max\left\{i_1,\dots,i_{t-1}\right\}$, then the element $(i_t, j_t-1)$ must exist in $\omega$ (not necessarily before the element $(i_t, j_t)$). 
    \end{enumerate}
We denote by $R_{(i)}^A(n,k)$ the set of all RG-words of type $A$ of the first kind satisfying $$\max\left\{i_1,\dots, i_n\right\}=k.$$

\end{definition}

We define a mapping $\Phi^A$ between  permutations in $S_n$, written as a product of cycles in standard form (i.e. the minimal element in each cycle precedes the other elements in that cycle and the cycles are ordered such that their minimal elements form an increasing sequence) and restricted growth words of type $A$ of the first kind as follows:
Let $\pi=C_1 \cdots C_k$ be written as a product of cycles in standard form, then $\Phi^A(\pi)=\omega_1\cdots \omega_n$ is defined as follows: 
$$\omega_i=(t\ , \ {\rm loc}(i,C_t)),$$
where ${\rm loc}(i,C_t)$ is the location of $i$ in the cycle $C_t$.

\begin{example}
The permutation (presented in standard form)
$$\underbrace{(\mathbf{1},7)}_{C_1} \underbrace{(\mathbf{2},5,4,9)}_{C_2}\underbrace{(\mathbf{3},8)}_{C_3}\underbrace{(\mathbf{6})}_{C_4} \in S_9$$
corresponds to the RG-word of type $A$ of the first kind: $$\omega=\underbrace{(1,1)}_{\omega_1}\underbrace{(2,1)}_{\omega_2}\underbrace{(3,1)}_{\omega_3}\underbrace{(2,3)}_{\omega_4}\underbrace{(2,2)}_{\omega_5}\underbrace{(4,1)}_{\omega_6}\underbrace{(1,2)}_{\omega_7}\underbrace{(3,2)}_{\omega_8}\underbrace{(2,4)}_{\omega_9}.$$
\end{example}

We describe now how to recover a permutation from an RG-word of type $A$ of the first kind. Given a word $\omega=\omega_1 \cdots \omega_n \in R_{(i)}^A(n,k)$, we construct a permutation $\pi$ via its cycle structure as follows: for each $t$ satisfying $\omega_t=(i_t,1)$, we open a cycle starting with the element $t$. For each $\omega_t=(i_t,j_t)$ with $j_t\neq 1$, we put the element $t$ in the cycle numbered $i_t$ in the $j_t\,^{\rm th}$ \ place. 

\subsubsection{A {\it q}-analogue}\label{sec q-analogue type A}

A natural way of defining a $q$-analogue of the Stirling number of the first kind is via the following recursion:
\begin{equation}\label{recurrence A}
s^A_q(n,k)=s^A_q(n-1,k-1)+[n-1]_q \cdot s^A_q(n-1,k),
\end{equation}
with the boundary condition: $s^A_q(0,k)=\delta_{0k}$.
We present now a combinatorial realization for this $q$-analogue. 
In order to do that, we first define a linear order $\preceq_{\rm lex}$ on the alphabet $\Sigma_{(i)}^A$ to be the usual lexicographic order:
$$(i,j) \preceq_{\rm lex} (i',j') \qquad  \Longleftrightarrow \qquad \left(i < i'\right) \ \mbox{  {\rm or}  } \ \left(i=i' \ \mbox{ {\rm and}  } \ j \leq j' \right),$$
with the convention that $\omega_i \prec_{\rm lex} \omega_j$ means that $\omega_i \preceq_{\rm lex} \omega_j$ and $\omega_i \neq \omega_j$.

\medskip

The following is the definition of an inversion statistic over the set of RG-words of the first kind: \begin{definition}
Let $\omega=\omega_1\cdots \omega_n\in R_{(i)}^A(n,k)$. Define the {\em inversion of $\omega$} by $${\rm inv}_A(\omega)=\#\left\{(\omega_i,\omega_j) \mid i<j,\ \omega_j \prec_{\rm lex} \omega_i \right\}.$$ 
\end{definition}

\begin{example}\label{example of RG1A word}
The permutation $$\pi=\underbrace{(1,7)}_{C_1} \underbrace{(2,5,4,9)}_{C_2}\underbrace{(3,8)}_{C_3}\underbrace{(6)}_{C_4},$$
which corresponds to the RG-word: $$\omega=\underbrace{(1,1)}_{\omega_1}\underbrace{(2,1)}_{\omega_2}\underbrace{(3,1)}_{\omega_3}\underbrace{(2,3)}_{\omega_4}\underbrace{(2,2)}_{\omega_5}\underbrace{(4,1)}_{\omega_6}\underbrace{(1,2)}_{\omega_7}\underbrace{(3,2)}_{\omega_8}\underbrace{(2,4)}_{\omega_9},$$
satisfies the following:
$${\rm inv}_A (\omega)= \#\left\{
\begin{array}{c}
(\omega_2,\omega_7), (\omega_3,\omega_4),(\omega_3,\omega_5),(\omega_3,\omega_7),(\omega_3,\omega_9),
(\omega_4,\omega_5), \\
(\omega_4,\omega_7),
(\omega_5,\omega_7),(\omega_6,\omega_7),
(\omega_6,\omega_8),(\omega_6,\omega_9),
(\omega_8,\omega_9)    
\end{array}\right\}=12. $$
\end{example}



\medskip

We use the parameter ${\rm inv}_A$ to provide a combinatorial realization for this $q$-analogue of the Stirling number of type $A$ defined above:

\begin{prop}\label{recursion A q version}
The generating function of the statistic ${\rm inv}_A$ satisfies: 
$$ \sum\limits_{\omega \in R_{(i)}^A(n,k)} q^{{\rm inv}_A (\omega)}\ \  = \ \  s_q^A(n,k). $$
Hence, the parameter  ${\rm inv}_A$ is a combinatorial realization of the $q$-Stirling number of type $A$ of the first kind. 
\end{prop}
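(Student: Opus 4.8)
The plan is to show that the generating function
$F(n,k):=\sum_{\omega\in R_{(i)}^A(n,k)}q^{{\rm inv}_A(\omega)}$
satisfies the very recursion~(\ref{recurrence A}) that defines $s_q^A(n,k)$, together with the same initial data, and then to conclude $F=s_q^A$ by induction on $n$. Because $\Phi^A$ identifies $R_{(i)}^A(n,k)$ with the permutations in $S_n$ having $k$ cycles, I would carry out the whole argument on the standard cycle form of a permutation $\pi$, writing ${\rm inv}_A(\pi)$ for ${\rm inv}_A(\Phi^A(\pi))$. The base case $n=1$ is immediate: $R_{(i)}^A(1,k)$ is nonempty only for $k=1$, where it is the single word $(1,1)$ with no inversions, so $F(1,k)=\delta_{1k}=s_q^A(1,k)$.

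For the inductive step I would use insertion of the largest element. Every $\pi\in S_n$ with $k$ cycles arises uniquely from a permutation $\pi'\in S_{n-1}$ in exactly one of two ways: either $n$ forms a new singleton cycle, in which case $\pi'$ has $k-1$ cycles; or $n$ is inserted immediately after one of the $n-1$ elements of $\pi'$ inside an existing cycle, in which case $\pi'$ has $k$ cycles. The inverse map is deletion of $n$, so this is a genuine bijection, and it is exactly the one underlying the classical recursion $s(n,k)=s(n-1,k-1)+(n-1)s(n-1,k)$. Transported through $\Phi^A$, it splits $R_{(i)}^A(n,k)$ into a copy of $R_{(i)}^A(n-1,k-1)$ and $(n-1)$ insertion-copies of $R_{(i)}^A(n-1,k)$.

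The heart of the proof is tracking how ${\rm inv}_A$ changes. Writing $\pi'$ in standard cycle form as a word $w'$ of length $n-1$, let $r(m)$ be the number of letters lying to the right of $m$ in $w'$. I claim that inserting $n$ directly after $m$ raises ${\rm inv}_A$ by exactly $r(m)$, whereas opening a new singleton cycle for $n$ leaves ${\rm inv}_A$ unchanged. The key observation is that inserting $n$ preserves the cycle of every other element and their relative positions within each cycle, so no inversion \emph{not} involving $n$ is created or destroyed; hence the increment equals the number of new inversions of the form $(\omega_{m'},\omega_n)$. Here $n$ occupies the last word-position and $\omega_n=(t,p)$, where $t$ is the cycle of $m$ and $p$ is one more than the position of $m$; such a pair is an inversion precisely when $\omega_n\prec_{\rm lex}\omega_{m'}$, i.e. when $m'$ lies in a cycle later than $t$ or in cycle $t$ beyond the insertion point. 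These are exactly the letters to the right of $m$ in $w'$, numbering $r(m)$. In the new-cycle case $\omega_n=(k,1)$ is $\prec_{\rm lex}$-largest, so it creates no inversion.

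To finish, note that as $m$ ranges over the $n-1$ elements of $\pi'$ the quantity $r(m)$ takes each value in $\{0,1,\dots,n-2\}$ exactly once, since the element occupying word-position $n-1-j$ has $r=j$. Therefore $\sum_{m}q^{{\rm inv}_A(\pi')+r(m)}=q^{{\rm inv}_A(\pi')}\,[n-1]_q$, and summing the two cases over all $\pi'$ gives $F(n,k)=F(n-1,k-1)+[n-1]_q\,F(n-1,k)$, which is~(\ref{recurrence A}). The step I expect to demand the most care is the one in the third paragraph: rigorously verifying that inserting $n$ leaves every inversion among the remaining letters intact despite the position shift it induces inside the host cycle, and correctly classifying which new pairs $(\omega_{m'},\omega_n)$ are inversions under $\prec_{\rm lex}$.
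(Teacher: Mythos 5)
Your proposal is correct and takes essentially the same approach as the paper: both prove the recurrence (\ref{recurrence A}) via the bijection that deletes/inserts the largest element $n$, splitting into the new-singleton-cycle case (no new inversions, giving the $s_q^A(n-1,k-1)$ term) and the insertion-into-an-existing-cycle case, while checking that the position shifts inside the host cycle create no spurious inversions. Your accounting via $r(m)$ — showing the inversion increment takes each value in $\{0,\dots,n-2\}$ exactly once — is in fact a welcome explicit justification of the $[n-1]_q$ factor, which the paper's proof leaves somewhat implicit.
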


\begin{proof}
We have to prove that the generating function  $\sum\limits_{\omega \in R_{(i)}^A(n,k)} q^{{\rm inv}_A (\omega)}$ satisfies the recurrence (\ref{recurrence A}) and the boundary conditions: $\sum\limits_{\omega \in R_{(i)}^A(0,k)} q^{{\rm inv}_A (\omega)}=\delta_{0k}$.

We start with proving the recurrence. Let 
$$f:R_{(i)}^A(n,k) \rightarrow R_{(i)}^A(n-1,k) \cupdot R_{(i)}^A(n-1,k-1)$$ 
be the function defined by removing the last element, i.e.  $$f(\omega_1\cdots\omega_n)=\omega_1\cdots\omega_{n-1}.$$     
For $\omega =\omega_1\cdots\omega_n =(i_1,j_1) \cdots (i_n,j_n) \in R_{(i)}^A(n,k)$, we have exactly two possibilities: 
\begin{itemize}
\item If $i_n=\max\{i_1,\dots,i_{n-1}\}+1$, and hence $j_n=1$ (i.e. $n$ is located alone in a separated cycle), then $f(\omega)\in R_{(i)}^A(n-1,k-1)$. Actually, the RG-words satisfying this condition in $R_{(i)}^A(n,k)$ are in bijection with $R_{(i)}^A(n-1,k-1)$. In this case, we have 
${\rm inv}_A(f(\omega))={\rm inv}_A(\omega),$
since $(i_t,j_t) \preceq_{\rm lex} (i_n,1)$ for $1 \leq t <n$.

\item Otherwise, we have $i_n \leq \max\{i_1,\dots,i_{n-1}\}$
(i.e. $n$ is not in a separated cycle). Then\break $f(\omega)\in R_{(i)}^A(n-1,k)$. Note that 
$i_n$ can create up to $n-2$ new inversions, depending on the number of elements $\omega_t=(i_t,j_t)$ (for $t<n$) satisfying $(i_n,j_n)\prec_{\rm lex}(i_t,j_t)$ (note that $(i_1,j_1)=(1,1)\prec_{\rm lex}(i_n,j_n)$ and therefore an addition of $n-1$ inversions is impossible). 
Hence, we have: $${\rm inv}_A(\omega)={\rm inv}_A(f(\omega))+\ell,$$
where $\ell \in \{0,1,\dots,n-2\}$ is the number of inversions created by the addition of the element $\omega_n= (i_n,j_n)$.
Note that in the case that $(i_n,j_n)$ does create an inversion, the elements of the form $(i_n,j_k)$ for $j_k \geq j_n$ will be modified to $(i_n,j_k+1)$, respectively (this change creates no new inversions with respect to the elements $\omega_t$ where $t<n$); see Example \ref{exam-push} below. 
\end{itemize}

Summing up both possibilities yields the requested recurrence.

\medskip

The boundary conditions $s^A_q(0,0)=1$ and $s^A_q(0,k)=0$ for $k \neq0$ are given by convention. 
\end{proof}

\begin{example}\label{exam-push}
We return to Example \ref{example of RG1A word} and show the influence of inserting the element\break $(i_{10},j_{10})=(2,3)$ (corresponding to inserting the number $10$ into the second cycle in the third place of $\pi=(1,7)(2,5,4,9)(3,8)(6)$, after the number $5$). 
The RG-word obtained is (the changes corresponding to the insertion of $(i_{10},j_{10})=(2,3)$ are marked in bold): 
$$w=\underbrace{(1,1)}_{\omega_1}\underbrace{(2,1)}_{\omega_2}\underbrace{(3,1)}_{\omega_3}\underbrace{{\bf(2,4)}}_{\omega_4}\underbrace{(2,2)}_{\omega_5}\underbrace{(4,1)}_{\omega_6}\underbrace{(1,2)}_{\omega_7}\underbrace{(3,2)}_{\omega_8}\underbrace{{\bf (2,5)}}_{\omega_9}\underbrace{(2,3)}_{\omega_{10}}.$$
Note that the addition of the last pair $(2,3)$ creates five new inversions:
$$((3,1),(2,3)),\ \ ((2,4),(2,3)),\ \ ((4,1),(2,3)),\ \ ((3,2),(2,3)),\ \ ((2,5),(2,3)),$$
while the change of the pairs $(2,3)$ and $(2,4)$ to $(2,4)$ and $(2,5)$, respectively, does not create any additional inversions, as promised.
\end{example}

\begin{remark}
Note that Proposition \ref{recursion A q version} appears in a different language as Exercise 22(b) in Chapter 3 of Sagan's book \cite[p. 113]{SaganBook}. Note though that the statistic {\rm maj} appearing there should be replaced by the statistic {\rm inv} \cite{Sagan personal}.
\end{remark}

\subsubsection{A {\it q,r}-analogue} \label{sec q,r-analogue type A}

We first define the {\it $r$-variant of restricted growth words} of the first kind: 
\begin{definition}\label{def_RG_kind1_ver_r}
An {\em $r$-restricted growth (RG-)word of the first kind} is a restricted growth word of the first kind $\omega=\omega_1\cdots \omega_n=(i_1,j_1)\cdots (i_n,j_n)$ in the alphabet $\Sigma^A_{(i)}$, satisfying the following additional  condition: the first $r$ pairs of $\omega$ are:
$(1,1)(2,1) \cdots (r,1).$

We denote by $R^A_{(i)}(n,k,r)$ the set of all $r$-RG-words of the first kind satisfying $\max i_t=k.$
\end{definition}

In terms of the cycle decomposition of ordinary permutations, the additional condition is translated to the requirement that no two elements of   $1,2,\dots,r$ appear in the same cycle.

\medskip

Now we introduce the $r$-variant of the $q$-analogue of the Stirling number of the first kind:

\begin{definition}
The {\em $q,r$-Stirling number $s_{q}^A(n,k,r)$ of the first kind} is defined as follows:
$$s_{q}^A(n,k,r) := \sum\limits_{\omega \in R_{(i)}^{A}(n,k,r)} q^{{\rm inv}_A (\omega)}.$$
\end{definition}

As above, we use the parameter ${\rm inv}_A$ to provide a combinatorial realization for this $q,r$-analogue of the $r$-Stirling number of type $A$:

\begin{prop}\label{recursion q,r version type A}
For each $1 \leq r < k\leq n$,
\begin{equation}
s^A_q(n,k,r)=s^A_q(n-1,k-1,r)+[n-1]_q \cdot s^A_q(n-1,k,r),
\label{recursion q,r stirling A}
\end{equation}
with the boundary conditions: $s_{q}^A(r,r,r)=1$, 
$s_{q}^A(n,k,r)=0$\ \ for \ $0\leq k < r$, and $s^A_{q}(0,k,r)= \delta_{0k}$.
\end{prop}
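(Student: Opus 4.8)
The plan is to mimic the proof of Proposition \ref{recursion A q version} almost verbatim, since the $q,r$-statement differs only in restricting to the subset $R_{(i)}^A(n,k,r)$ and in the boundary conditions. The same deletion map $f(\omega_1\cdots\omega_n)=\omega_1\cdots\omega_{n-1}$ should serve as the combinatorial engine here. The key observation I would make first is that for $n>r$, removing the last letter $\omega_n$ of an $r$-RG-word of the first kind yields again an $r$-RG-word, because the constraint ``the first $r$ pairs are $(1,1)(2,1)\cdots(r,1)$'' only concerns positions $1,\dots,r$ and is untouched by deleting position $n$. Thus $f$ restricts to a map $R_{(i)}^A(n,k,r)\to R_{(i)}^A(n-1,k,r)\cupdot R_{(i)}^A(n-1,k-1,r)$, and the whole case-analysis transfers directly.

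Concretely, I would split $\omega\in R_{(i)}^A(n,k,r)$ into the two cases exactly as before. In the first case, $i_n=\max\{i_1,\dots,i_{n-1}\}+1$ (so $j_n=1$ and the element $n$ sits alone in a new cycle); here $f(\omega)\in R_{(i)}^A(n-1,k-1,r)$, the correspondence is a bijection, and since $(i_t,j_t)\preceq_{\rm lex}(i_n,1)$ for all $t<n$ the new letter creates no inversions, so ${\rm inv}_A(f(\omega))={\rm inv}_A(\omega)$; this accounts for the term $s^A_q(n-1,k-1,r)$. In the second case $i_n\leq\max\{i_1,\dots,i_{n-1}\}$, so $f(\omega)\in R_{(i)}^A(n-1,k,r)$, and inserting $\omega_n$ can create $\ell\in\{0,1,\dots,n-2\}$ inversions with the letters in positions $2,\dots,n-1$ (position $1$ is $(1,1)$, which never forms an inversion). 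Summing $q^\ell$ over $\ell=0,\dots,n-2$ gives the factor $[n-1]_q$, yielding the term $[n-1]_q\cdot s^A_q(n-1,k,r)$. I should note that here the restriction $r<k$ (rather than $r\le k$) matters only to ensure we are past the frozen prefix; for $n>r$ the insertion of $\omega_n$ ranges freely, so the factor is genuinely $[n-1]_q$ and not a truncated sum.

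For the boundary conditions I would argue directly from the definition of $R_{(i)}^A(n,k,r)$. When $n=r$ and $k=r$, the frozen prefix $(1,1)(2,1)\cdots(r,1)$ is the \emph{entire} word, so $R_{(i)}^A(r,r,r)$ is a single word with ${\rm inv}_A=0$, giving $s^A_q(r,r,r)=1$. When $0\le k<r$ the word cannot contain letters with first coordinate up to $r$ (the prefix already forces $\max i_t\ge r$), so $R_{(i)}^A(n,k,r)=\emptyset$ and the sum vanishes. Finally $s^A_q(0,k,r)=\delta_{0k}$ is the empty-word convention inherited from Proposition \ref{recursion A q version}.

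I do not anticipate a serious obstacle, since the argument is a routine adaptation of the already-proved type $A$ recurrence; the only point requiring a moment's care is confirming that the frozen prefix is compatible with the deletion map and that the inversion bookkeeping (including the re-indexing of the pairs $(i_n,j_k)\mapsto(i_n,j_k+1)$, which introduces no new inversions, as illustrated in Example \ref{exam-push}) is unaffected by the $r$-restriction. The cleanest write-up simply points to the proof of Proposition \ref{recursion A q version}, verifies that $f$ respects the extra prefix condition, and then checks the three boundary cases explicitly.
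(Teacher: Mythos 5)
Your proposal is correct and takes essentially the same approach as the paper: the paper likewise disposes of the recursion by noting it is identical to the proof of Proposition \ref{recursion A q version} (your explicit check that the deletion map $f$ respects the frozen prefix $(1,1)\cdots(r,1)$ just fills in what the paper leaves implicit), and then verifies the boundary conditions, with $s^A_q(r,r,r)=1$ coming from the single word $(1,1)\cdots(r,1)$ of weight $q^0=1$. The only cosmetic difference is that the paper declares $s^A_q(n,k,r)=0$ for $0\leq k<r$ to be a convention, whereas you derive it from the emptiness of $R_{(i)}^A(n,k,r)$, which is a harmless (indeed slightly sharper) variant.
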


\begin{proof}
We proceed directly to the boundary conditions, since the recursion is identical to the one given in Proposition \ref{recursion A q version}. 

Regarding the first boundary condition, the polynomial $s^A_{q}(r,r,r)$ is the generating function of the $r$-RG-word of the first kind $\omega=\omega_1\cdots \omega_r=(1,1) \cdots (r,1)$  contributing a weight $q^{0}=1$. 

The last two boundary conditions $s_{q}^A(n,k,r)=0$ for $0\leq k < r$ and $s^A_q(0,k,r)= \delta_{0k}$ are conventions. 
\end{proof}

\subsection{Type {\it B}}\label{sec type B first kind}

\subsubsection{Restricted growth words}

For introducing a $q$-analogue for the Stirling number of type $B$ of the first kind, we define {\it restricted growth words} for this context: 

\begin{definition}\label{def_RGB_kind1}
Let $\Sigma^B_{(i)}$ be the alphabet $\{(i,j)\in \mathbb{Z}\times \mathbb{Z} \mid 1 \leq |i|,|j| \leq n\}$. A {\em restricted growth (RG-)word of type $B$ of the first kind} is a word 
$$\omega=\omega_1\cdots \omega_n=(i_1,j_1)\cdots (i_n,j_n)$$ in the alphabet $\Sigma^B_{(i)}$, which satisfies the following conditions:
\begin{enumerate}
    \item[(1)] We have either  $(i_1,j_1)=(1,1)$ or $(i_1,j_1)=(-1,1)$. 
    \item[(2)] For each $2 \leq t \leq n$, the following inequality holds: 
    \begin{equation*}
     |i_t| \leq \max\left\{|i_1|,\dots,|i_{t-1}|\right\}+1.\end{equation*} 
    \item[(3a)] If $|i_t|=\max\left\{|i_1|,\dots,|i_{t-1}|\right\}+1$,
    we have: $j_t=1$. 
    \item[(3b)] If $|i_t| \leq \max\left\{|i_1|,\dots,|i_{t-1}|\right\}$,
    one of the following pairs exists in $\omega$: either $(i_t, j_t-1)$ or $(i_t,-(j_t-1))$ (not necessarily before the element $(i_t, j_t)$).
\end{enumerate}
We denote by $R^B_{(i)}(n,k)$ the set of all RG-words of type $B$ of the first kind satisfying $$\#\{i_t\mid 1\leq t\leq n,\ i_t<0\}=k.$$
\end{definition}

\medskip

We define a mapping $\Phi^B$ between signed permutations in $B_n$ and restricted growth words of type $B$ of the first kind as follows:
Let $\pi=C_1 \cdots C_k$ where for each $1 \leq \ell \leq k$, $C_\ell$ is a signed cycle in standard form. Let $\Phi^B(\pi)=\omega_1\cdots \omega_n$ be defined according to the following rule, for each $1 \leq i \leq n$: 
$$\omega_i=\left((-1)^{{\rm par}(C_t)} \cdot t\ , \ {\rm sign}(i,C_t) \cdot {\rm loc}(i,C_t)\right),$$
where:
\begin{itemize}
\item $i$ (or $-i$) appears in the cycle $C_t$,
\item ${\rm par}(C_t)$ is $0$ if $C_t$ is a split cycle, and $1$ otherwise,
\item ${\rm sign}(i,C_t)$ is the sign of the first appearance of $i$ or $-i$ in $C_t$, and
\item ${\rm loc}(i,C_t)$ is the location of the first appearance of $i$ or $-i$ in the cycle $C_t$.
\end{itemize}

\medskip

\begin{example}
The signed permutation written in standard form (where the non-split cycles are emphasized)
$$\underbrace{(1,-7)}_{C_1}\underbrace{(-1,7)}_{-C_1} \underbrace{(2,-5,4,-9)}_{C_2}\underbrace{(-2,5,-4,9)}_{-C_2}\underbrace{\mathbf{(3,8,-3,-8)}}_{C_3=-C_3}\underbrace{\mathbf{(6,-6)}}_{C_4=-C_4},$$
corresponds to the RG-word of type $B$ of the first kind: $$\omega=\underbrace{(1,1)}_{\omega_1}\underbrace{(2,1)}_{\omega_2}\underbrace{(-3,1)}_{\omega_3}\underbrace{(2,3)}_{\omega_4}\underbrace{(2,-2)}_{\omega_5}\underbrace{(-4,1)}_{\omega_6}\underbrace{(1,-2)}_{\omega_7}\underbrace{(-3,2)}_{\omega_8}\underbrace{(2,-4)}_{\omega_9}.$$
\end{example}

We describe now the other way around, i.e. how to obtain a signed permutation in standard form  from a restricted growth word of type $B$ of the first kind. Given a word $\omega=\omega_1 \cdots \omega_n \in R^B_{(i)}(n,k)$, we construct a signed permutation $\pi$ via its cycle structure as follows: for each $\omega_t=(i_t,j_t)$ with $j_t=1$, we open a cycle starting with the element $|i_t|$, which will be a split cycle if and only if $i_t>0$. For each $\omega_t=(i_t,j_t)$ with $j_t\neq 1$, we insert ${\rm sign}(j_t)\cdot t$ to the cycle numbered $|i_t|$ in the $|j_t|^{\rm  th}$ place. For each of the cycles, we complete its negative counterpart according to whether it is split or non-split.

\medskip

Next, we define the {\it $r$-variant of restricted growth words} of type $B$ of the first kind: 
\begin{definition}\label{def_RGB_kind1_ver_r}
An {\em $r$-restricted growth (RG-)word of type $B$ of the first kind} is a restricted growth (RG-)word of type $B$ of the first kind $$\omega=\omega_1\cdots \omega_n=(i_1,j_1)\cdots (i_n,j_n)$$ in the alphabet $\Sigma^B_{(i)}$, satisfying the following additional  condition: the first $r$ pairs of $\omega$ are:
$$(-1,1)(-2,1) \cdots (-r,1).$$

We denote by $R^B_{(i)}(n,k,r)$ the set of all $r$-RG-words of type $B$ of the first kind satisfying $$\#\{i_t\mid 1\leq t\leq n,\ i_t<0\}=k.$$
\end{definition}
In terms of the cycle decomposition of signed permutations, the additional condition is translated to the requirement that the elements  $1,2,\dots,r$ appear in non-split cycles, but no two of them  share a common cycle.

\subsubsection{The {\it q}-Stirling numbers of type {\it B}  of the first kind}

We now define a $q$-analogue for the Stirling numbers  of type $B$ of the first kind, similarly to what we have done in type $A$ (see Section \ref{sec q-analogue type A} above). We start by defining a linear order $\preceq_{\rm abslex}$ on the elements on a given RG-word of type $B$ of the first kind as follows:
$$(i,j) \preceq_{\rm abslex} (i',j') \qquad  \Longleftrightarrow \qquad \left(|i| < |i'|\right) \ \mbox{  {\rm or}  } \ \left(|i|=|i'| \ \mbox{ {\rm and}  } \ |j| \leq |j'|\right),$$
with the convention that $\omega_i \prec_{\rm abslex} \omega_j$ means $\omega_i \preceq_{\rm abslex} \omega_j$ and $\omega_i \neq \omega_j$.

\begin{remark}
Note that the relation $\preceq_{\rm abslex}$ {\em is not} an order relation on the full alphabet $\Sigma^B_{(i)}$, due to the simple example of a pair of elements $(-1,1)$ and $(1,1)$, which satisfy $(-1,1) \preceq_{\rm abslex} (1,1)$ as well as $(1,1) \preceq_{\rm abslex} (-1,1)$. This is the reason we define this order over each RG-word of type $B$ of the first kind separately, where two such pairs can not appear simultaneously. 
\end{remark}

\medskip

Based of this order, we define three natural statistics on the set of RG-words of type $B$ of the first kind: 

\begin{definition}
\label{def finv}
Let $\omega=\omega_1\cdots \omega_n\in R_{(i)}^B(n,k)$. Define the {\em inversion of $\omega$}, denoted ${\rm inv}_B$, by $${\rm inv}_B(\omega)=\#\left\{(\omega_i,\omega_j) \mid i<j,\ \omega_j \prec_{\rm abslex} \omega_i \right\},$$
and the size of the set of negative elements:
${\rm neg}(\omega)=\#\left\{\omega_t=(i_t,j_t) \mid j_t<0\right\}$.

Furthermore, define the {\em flag-inversion} of an RG-word $\omega$ as follows: 
$${\rm finv}(\omega)=2 {\rm inv}_B(\omega)+{\rm neg}(\omega).$$

\end{definition}

The name of the parameter {\it flag-inversion} is based on its similarity to the flag major index defined by Adin and Roichman \cite{AR}.

\begin{example}\label{example of RG1 word}
The signed permutation written in standard form (where the non-split cycles are emphasized)
$$\pi=\underbrace{(1,-7)}_{C_1}\underbrace{(-1,7)}_{-C_1} \underbrace{\mathbf{(2,-5,4,-9,-2,5,-4,9)}}_{C_2=-C_2}\underbrace{\mathbf{(3,8,-3,-8)}}_{C_3=-C_3}\underbrace{\mathbf{(6,-6)}}_{C_4=-C_4},$$
corresponds to the RG-word of type $B$ of the first kind: 
$$\omega=\underbrace{(1,1)}_{\omega_1}\underbrace{(-2,1)}_{\omega_2}\underbrace{(-3,1)}_{\omega_3}\underbrace{(-2,3)}_{\omega_4}\underbrace{(-2,-2)}_{\omega_5}\underbrace{(-4,1)}_{\omega_6}\underbrace{(1,-2)}_{\omega_7}\underbrace{(-3,2)}_{\omega_8}\underbrace{(-2,-4)}_{\omega_9}.$$
Then we have:
$${\rm inv}_B (\omega)= \#\left\{
\begin{array}{c}
(\omega_2,\omega_7), (\omega_3,\omega_4),(\omega_3,\omega_5),(\omega_3,\omega_7),(\omega_3,\omega_9),
(\omega_4,\omega_5), \\
(\omega_4,\omega_7),
(\omega_5,\omega_7),(\omega_6,\omega_7),
(\omega_6,\omega_8),(\omega_6,\omega_9),
(\omega_8,\omega_9)    
\end{array}\right\}=12 $$
and 
${\rm neg}(\omega)=\#\left\{\omega_5,\omega_7,\omega_9\right\} = 3$.
Therefore: 
$${\rm finv}(\omega)=2{\rm inv}_B(\omega)+{\rm neg}(\omega)=27.$$

\end{example}

\begin{remark}
The statistics ${\rm inv}_B$, ${\rm neg}$ and ${\rm finv}$ can also be considered as statistics on signed permutations. 
Let $\pi=C_1 \cdots C_k\in B_n$ be written in the standard form of the decomposition of $\pi$ in cycles and let $\tau$ be the signed permutation of $B_n$, represented in its one-line notation, obtained by erasing $-C$ (in the case that $C$ is a split cycle) or its second half (in the case that $C$ is a non-split cycle) of $\pi$ and removing the parentheses. Then, ${\rm neg}(\pi)$ is the number of negative elements in $\tau$.

Moreover, let $|\tau|$ be the permutation of $S_n$ obtained by removing the negative signs in $\tau$. Then it is easy to observe that ${\rm inv}_B(\pi)={\rm inv}(|\tau|)$, where for $\sigma \in S_n$, $(i,j) \in {\rm inv}(\sigma)$ if $i<j$ but $\sigma_i > \sigma_j$.
\end{remark}

We illustrate this remark using the following example: 
\begin{example}\label{example of a signed permutation}
We repeat the computation of the statistics ${\rm inv}_B$ and ${\rm neg}$ of the signed permutation from Example \ref{example of RG1 word} using its cycle decomposition: given the signed permutation $$\pi=(1,-7)(-1,7)(2,-5,4,-9,-2,5,-4,9)(3,8,-3,-8)(6,-6),$$
we have that: 
$$\tau=1,-7,2,-5,4,-9,3,8,6.$$
Now, ${\rm neg}(\pi)=3$, since there are three negative elements in $\tau$.
Moreover: 
\begin{eqnarray*}
{\rm inv}_B (\pi)&= &{\rm inv}(|\tau|)= {\rm inv}(172549386)=\\
&=& \#\left\{
\begin{array}{c}
(2,3), (2,4),(2,5),(2,7),(2,8),(4,5), \\
(4,7),(5,7),(6,7),(6,8),(6,9),(8,9)    
\end{array}\right\}=12,
\end{eqnarray*}
and thus:
$${\rm finv}(\pi)=2{\rm inv}_B(\pi)+{\rm neg}(\pi)=27.$$

\end{example}

\medskip

Our next step is using the statistic ${\rm finv}$ in defining a $q$-analogue of the Stirling number of type $B$ of the first kind:

\begin{definition}\label{def q stirling type B first kind}
We define the {\em $q$-Stirling number $s_q^B(n,k)$ of type $B$ of the first kind} as follows:
$$s_q^B(n,k) :=\sum\limits_{\omega \in R_{(i)}^B(n,k)}{q^{{\rm finv}(\omega)}}=\sum\limits_{\omega \in R_{(i)}^B(n,k)} q^{2{\rm inv}_B (\omega)+{\rm neg}(\omega)}.$$
\end{definition}

By the bijection between the set $R_{(i)}^B(n,k)$ and the group of signed permutations $B_n$, we have for $q=1$:
$$s_q^B(n,k)|_{q=1}=s^B(n,k).$$

As mentioned in Proposition \ref{recursion theorem first kind}, the Stirling number of type $B$ of the first kind $s^B(n,k)$ satisfies the following recursion:
$$s^B(n,k)=s^B(n-1,k-1)+(2n-1)s^B(n-1,k).$$

We now prove a Stirling-type recursion for $s_q^B(n,k)$:

\begin{prop}\label{recursion q version}
For each $1 \leq k\leq n$,
\begin{equation}
s^B_q(n,k)=s^B_q(n-1,k-1)+\left(1+[2n-2]_q\right) \cdot s^B_q(n-1,k),
\label{recursion q stirling B}
\end{equation}
with the boundary conditions: $$s^B_q(n,0)=\sum\limits_{\ell=1}^{n}s_{q^2}^A(n,\ell)\cdot(1+q)^{n-\ell} \mbox{ \ for \ } n \geq 1,$$ and $s^B_q(0,k)= \delta_{0k}$.
\end{prop}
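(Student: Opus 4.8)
The plan is to prove the recurrence by the same last-letter-removal strategy used in Propositions~\ref{recursion q version second kind} and~\ref{recursion A q version}, and then to treat the two families of boundary conditions separately. Throughout I would freely use the interpretation from the Remark following Definition~\ref{def finv}: writing $\pi$ for the signed permutation encoded by $\omega$ and $\tau$ for its folded one-line word, one has ${\rm inv}_B(\omega)={\rm inv}(|\tau|)$ and ${\rm neg}(\omega)=\#\{\mbox{negative letters of }\tau\}$. The point of passing to $\tau$ is that deleting the last letter $\omega_n$ of the RG-word corresponds to deleting the unique copy of $\pm n$ from $\tau$, without disturbing the relative order of the remaining letters, so all the bookkeeping about shifting locations inside a cycle is handled automatically by the inversion count of $|\tau|$.

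For the recurrence I would consider $f\colon R^B_{(i)}(n,k)\to R^B_{(i)}(n-1,k)\cupdot R^B_{(i)}(n-1,k-1)$ given by $f(\omega_1\cdots\omega_n)=\omega_1\cdots\omega_{n-1}$, and split according to whether $|i_n|=\max\{|i_1|,\dots,|i_{n-1}|\}+1$ or $|i_n|\le\max\{|i_1|,\dots,|i_{n-1}|\}$. In the first case $j_n=1$ and $n$ opens a new cycle placed at the end of $\tau$, so $\omega_n$ is ${\rm abslex}$-maximal and positive, creating no inversion and no negative letter; hence ${\rm finv}(\omega)={\rm finv}(f(\omega))$. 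This case subdivides: $i_n<0$ gives a new non-split cycle with $f(\omega)\in R^B_{(i)}(n-1,k-1)$, a weight-preserving bijection accounting for the summand $s^B_q(n-1,k-1)$, whereas $i_n>0$ gives a new split cycle with $f(\omega)\in R^B_{(i)}(n-1,k)$, a weight-preserving bijection accounting for the summand $1\cdot s^B_q(n-1,k)$.

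In the remaining case the element $n$ is inserted into an existing cycle, so $f(\omega)\in R^B_{(i)}(n-1,k)$ with $k$ unchanged. Fixing $f(\omega)$, inserting $n$ amounts to inserting $\pm n$ into the length-$(n-1)$ word $\tau$ at any gap except directly in front of its first letter (which must stay the global minimum), i.e. at one of $n-1$ positions, with either sign. Since $n$ has the largest absolute value, placing it so that $r$ letters lie to its right adds exactly $r$ inversions to ${\rm inv}(|\tau|)$, so as the position varies the increment $\Delta{\rm inv}_B$ runs bijectively over $\{0,1,\dots,n-2\}$; the sign choice leaves $|\tau|$ (hence $\Delta{\rm inv}_B$) fixed while giving $\Delta{\rm neg}\in\{0,1\}$. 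Therefore $\Delta{\rm finv}=2\Delta{\rm inv}_B+\Delta{\rm neg}$ ranges bijectively over $\{0,1,\dots,2n-3\}$, and summing over the $2(n-1)$ insertions yields the factor $1+q+\cdots+q^{2n-3}=[2n-2]_q$ times $s^B_q(n-1,k)$. Combining the three contributions gives the stated recurrence (degenerate terms such as $k=n$ causing empty index sets contribute $0$, as usual).

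It then remains to settle the boundary conditions; $s^B_q(0,k)=\delta_{0k}$ is the empty-word convention, and the content is the formula for $s^B_q(n,0)$. A word in $R^B_{(i)}(n,0)$ has all first coordinates positive, i.e. encodes a signed permutation all of whose cycles are split. Forgetting signs should send such a word to a type-$A$ RG-word $\sigma\in R^A_{(i)}(n,\ell)$, where $\ell$ is the number of cycles, and because ${\rm inv}_B$ compares only absolute values one has ${\rm inv}_B(\omega)={\rm inv}_A(\sigma)$ independently of the signs. The sign data is free precisely on the $n-\ell$ letters that do not open a cycle (each records the sign of a non-minimal entry of a split cycle, independently $+$ or $-$), while the $\ell$ cycle-openers are forced positive; flipping one such sign changes ${\rm neg}$ by $1$ and fixes ${\rm inv}_B$. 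Hence for fixed $\sigma$ one gets $\sum_{\text{signs}}q^{2{\rm inv}_B(\omega)+{\rm neg}(\omega)}=q^{2{\rm inv}_A(\sigma)}(1+q)^{n-\ell}$, and summing first over $\sigma\in R^A_{(i)}(n,\ell)$ and then over $\ell$ produces $\sum_{\ell=1}^{n}s^A_{q^2}(n,\ell)(1+q)^{n-\ell}$, as required. I expect the main obstacle to be exactly this last step: one must check carefully that ``forget the signs'' is a genuine bijection from all-split type-$B$ RG-words onto type-$A$ RG-words decorated by a free sign on each non-opening letter (in particular that the growth condition (3b) survives passing to absolute values), and that ${\rm neg}$ counts precisely those chosen signs.
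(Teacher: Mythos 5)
Your proposal is correct and follows essentially the same route as the paper's proof: the same last-letter-removal map $f$ with the same three-case split (new non-split cycle giving $s^B_q(n-1,k-1)$, new split cycle giving the summand $1\cdot s^B_q(n-1,k)$, and insertion of $\pm n$ into an existing cycle giving the factor $[2n-2]_q$), together with the same sign-forgetting argument identifying $s^B_q(n,0)$ with $\sum_{\ell=1}^{n}s^A_{q^2}(n,\ell)(1+q)^{n-\ell}$. The only cosmetic difference is that you run the inversion bookkeeping through the folded one-line word $\tau$ (exactly the device the paper provides in the Remark after Definition~\ref{def finv}), which neatly absorbs the paper's explicit check that re-indexing the pairs $(i_k,j_k)\mapsto(i_k,j_k+1)$ after an insertion creates no new inversions.
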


\begin{proof}
We start by proving the recursion. Let 
$$f:R_{(i)}^{B}(n,k) \rightarrow R_{(i)}^{B}(n-1,k) \cupdot R_{(i)}^{B}(n-1,k-1)$$ 
be the function defined by removing the last pair $\omega_n=(i_n,j_n)$, i.e.  $$f(\omega) =f(\omega_1\cdots\omega_n)=\omega_1\cdots\omega_{n-1},$$ 
where $\omega_m=(i_m,j_m)$ for $1 \leq m \leq n-1$.
For $\omega \in R_{(i)}^{B}(n,k)$, we have exactly two possibilities: 
\begin{itemize}
\item[(a)] If $|i_n|>\max\left\{|i_1|,\dots,|i_{n-1}|\right\}$ (so $j_n=1$) and $i_n<0$ (i.e. $n$ is located alone in a {\it non-split} cycle), then $f(\omega)\in R_{(i)}^{B}(n-1,k-1)$. Actually, the RG-words of type $B$ of the first kind satisfying this condition in $R_{(i)}^{B}(n,k)$ are in bijection with $R_{(i)}^{B}(n-1,k-1)$, so we have:
$${\rm finv} (f(\omega)) = 2{\rm inv}_B(f(\omega))+{\rm neg}(f(\omega))=2{\rm inv}_B(\omega)+{\rm neg}(\omega)={\rm finv}(\omega),$$
since $(i_t,j_t) \preceq_{\rm abslex} (i_n,1)$ for $1 \leq t <n$, and so there are no new inversions.

\item[(b)] Otherwise, we distinguish between two sub-cases: 

\begin{itemize}
\item[(1)] If $i_n>\max\left\{|i_1|,\dots,|i_{n-1}|\right\}$ (so $j_n=1)$ and $i_n>0$ (i.e. $n$ is located alone in a {\it split} cycle), then: $f(\omega)\in R_{(i)}^{B}(n-1,k)$, which yields: $$\hspace{-5pt}{\rm finv}(f(\omega)) = 2{\rm inv}_B(f(\omega))+{\rm neg}(f(\omega))=2{\rm inv}_B(\omega)+{\rm neg}(\omega)={\rm finv}(\omega),$$
since $(i_t,j_t) \preceq_{\rm abslex} (i_n,1)$ for $1 \leq t <n$. Note that the RG-words of type $B$ of the first kind satisfying this condition in $R_{(i)}^{B}(n,k)$ are in bijection with $R_{(i)}^{B}(n-1,k)$ and hence contribute the coefficient $1$ in the second summand.

\item[(2)] If $|i_n| \leq \max\left\{|i_1|,\dots,|i_{n-1}|\right\}$
(i.e. $n$ is inserted to one of the existing cycles), then $f(\omega)\in R_{(i)}^{B}(n-1,k)$. Since by Definition \ref{def_RGB_kind1}(1), we have:   $(i_1,j_1)=(\pm 1, 1) \preceq_{\rm abslex} (i_n,j_n)$ \ (i.e. $n$ can not be inserted before the first place, due to the standard form condition), the addition of the element $\omega_n =(i_n,j_n)$ at the $n$th place of $\omega$ can create up to $n-2$ new inversions, depending on the number of pairs $\omega_t=(i_t,j_t)$ (for $t<n$) satisfying $(i_n,j_n)\preceq_{\rm abslex}(i_t,j_t)$. Moreover, $j_n$ can be either positive or negative (corresponding to the insertion of $n$ or $-n$ to an existing cycle). If $j_n>0$, then the actual contribution of $(i_n,j_n)$ is twice the number of inversions created, while if $j_n<0$, then $(i_n,j_n)$ contributes twice the number of inversions plus $1$. 
Hence, we have: 
\begin{eqnarray}\label{eqn case b2}
\hspace{-20pt}{\rm finv}(\omega)=2{\rm inv}_B(\omega)+{\rm neg}(\omega)&=&2{\rm inv}_B(f(\omega))+2\ell+{\rm neg}(f(\omega)) +\varepsilon_n= \\
\nonumber&=& {\rm finv}(f(\omega)) +2\ell +\varepsilon_n,
\end{eqnarray}
where $\ell \in \{0,1,\dots,n-2\}$ is the number of inversions created by the pair $(i_n,j_n)$ and $$\varepsilon_n=\left\{ \begin{array}{lc}
 0    &  j_n>0 \\
1     & j_n<0
\end{array}\right. . $$

Note that in case $(i_n,j_n)$ does create an inversion, the pairs $(i_k,j_k)$ satisfying $i_k=i_n$ and $|j_k| \geq |j_n|$ (i.e. the pairs corresponding to the elements in the cycle containing $\pm n$ which are located after $\pm n$) will be modified to $(i_k,j_k+1)$, respectively, but this modification does not create any new inversion
(see Example \ref{example with table} below). 
Hence, the total contribution is the the coefficient $[2n-2]_q$ in the second summand.
\end{itemize}

\end{itemize}

Summing up all possibilities yields the requested recursion.

\medskip

Regarding the first boundary condition, the polynomial $s^B_q(n,0)$ is the generating function of RG-words of type $B$ of the first kind $\omega=\omega_1\cdots \omega_n=(i_1,j_1) \cdots (i_n,j_n),$ satisfying $i_t>0$  for each $1\leq t \leq n$ (i.e. the signed permutations having only split cycles), where each word $\omega$ contributes a weight $q^{{\rm finv}(\omega)}$. Since $i_t>0$ for each $1\leq t \leq n$, we can actually sum over the RG-words of type $A$ of the first kind, and possibly add signs to $j_t$ for pairs $(i_t,j_t)$ satisfying $|j_t|>1$ (i.e. we can sign each element of the cycle, except for the cycle's first element).

Recall that:
$s_q^A(n,\ell)=\sum\limits_{\omega \in R^A_{(i)}(n,\ell)}q^{\rm inv(\omega)}$ and let $\omega \in R^A_{(i)}(n,\ell)$ (see Definition \ref{definition of RGA}). Given\break $1\leq t\leq n$ such that $|j_t|>1$, for every $s$ such that $i_s=i_t$ (i.e. $s$ and $t$ are in the same cycle), $j_s$ can be either positive or negative. In total, we have $n-\ell$ pairs $\omega_t=(i_t,j_t)$ that can be signed. Hence, we have for $n \geq 1$:
$$s_q^B(n,0)=\sum\limits_{\omega \in R^B_{(i)}(n,0)} q^{{\rm finv}(\omega)}=\sum\limits_{\omega \in R^B_{(i)}(n,0)} q^{2{\rm inv}_B(\omega)+{\rm neg}(\omega)}=\sum\limits_{\ell=1}^{n}s_{q^2}^A(n,\ell) \cdot (1+q)^{n-\ell}.$$

The second boundary condition $s^B_q(0,k)= \delta_{0k}$ is a convention. 
\end{proof}

\begin{remark}
Note that recursion (\ref{recursion q stirling B}) is different from the recursion of the first kind $q$-Stirling number of type $B$ defined by Sagan and Swanson \cite{SaSw}. In their recursion, our coefficient 
$1+[2n-2]_q$ is replaced by $[2n-1]_q$. We will elaborate on their statistic in Section \ref{Sec SaSw parameter}. 

One should notice that the coefficient of the second summand in our recursion actually reflects the two different cases (cases (b)(1) and (b)(2) above) for adding the last pair $\omega_n$.  
\end{remark}

\begin{example} \label{example with table}
We show the contribution of the various possibilities of inserting $\omega_{6}=(i_6,j_6)$ into the RG-word of type $B$ of the first kind
$$\omega=\underbrace{(1,1)}_{\omega_1}\underbrace{(2,1)}_{\omega_2}\underbrace{(-3,1)}_{\omega_3}\underbrace{{(2,3)}}_{\omega_4}\underbrace{(2,-2)}_{\omega_5},$$
which corresponds to the signed permutation:
$$(1)\ (-1)\ (2,-5,4)\ (-2,5,-4)\ (3,-3).$$

\hspace{-40pt}
\begin{tabular}{|c|c|c|c|}
\hline
$\#$ & {\rm New RG-word} & {\rm New permutation presented by cycles} & {\rm Contribution} \\
\hline
$(1)$ & {\small $(1,1)\ (2,1)\ (-3,1)\ (2,3)\ (2,-2)\ {\bf (1,2)}$} & {\small $(1,{\bf 6})\ (-1,{\bf -6})\ (2,-5,4)\ (-2,5,-4)\ (3,-3)$} & {\small $q^8$}\\
\hline
$(2)$ & {\small $(1,1)\ (2,1)\ (-3,1)\ (2,3)\ (2,-2)\ {\bf (1,-2)}$} & {\small $(1,{\bf-6})\ (-1,{\bf 6})\ (2,-5,4)\ (-2,5,-4)\ (3,-3)$} & {\small $q^9$}\\
\hline
$(3)$ & {\small $(1,1)\ (2,1)\ (-3,1)\ (2,{\bf 4})\ (2,{\bf -3})\ {\bf (2,2)}$}&
{\small $(1)\ (-1)\ (2,{\bf6},-5,4)\ (-2,{\bf-6},5,-4)\ (3,-3)$} &{
\small $q^6$} \\
\hline
$(4)$ & {\small $(1,1)\ (2,1)\ (-3,1)\ (2,{\bf 4})\ (2,{\bf -3})\ {\bf (2,-2)}$} & {\small $(1)\ (-1)\ (2,{\bf-6},-5,4)\  (-2,{\bf 6},5,-4)\ (3,-3)$ } &{\small $q^7$} \\
\hline
$(5)$ & {\small $(1,1)\ (2,1)\ (-3,1)\ (2,{\bf 4})\ (2, -2)\ {\bf (2,3)}$} & {\small $(1)\ (-1)\ (2,-5,{\bf6},4)\ (-2,5,{\bf -6},-4)\ (3,-3)$ } &{\small $q^4$} \\
\hline
$(6)$ & {\small $(1,1)\ (2,1)\ (-3,1)\ (2,{\bf 4})\ (2, -2)\ {\bf (2,-3)}$} & {\small $(1)\ (-1)\ (2,-5,{\bf-6},4)\ (-2,5,{\bf 6},-4)\ (3,-3)$} & {\small $q^5$}\\
\hline
$(7)$ & {\small $(1,1)\ (2,1)\ (-3,1)\ (2,3)\ (2,-2)\ {\bf (2,4)}$} & {\small $(1)\ (-1)\ (2,-5,4,{\bf6})\ (-2,5,-4,{\bf-6})\ (3,-3)$} & {\small $q^2$}\\
\hline
$(8)$ & {\small $(1,1)\ (2,1)\ (-3,1)\ (2,3)\ (2, -2)\ {\bf (2,-4)}$}& {\small $(1)\ (-1)\ (2,-5,4,{\bf-6})\ (-2,5,-4,{\bf 6})\ (3,-3)$} & {\small $q^3$}\\
\hline
$(9)$ & {\small $(1,1)\ (2,1)\ (-3,1)\ (2,3)\ (2, -2)\ {\bf (-3,2)}$} & {\small $(1)\ (-1)\ (2,-5,4)\  (-2,5,-4)\ (3,{\bf6},-3,{\bf-6})$} & $1$\\
\hline
$(10)$ & {\small $(1,1)\ (2,1)\ (-3,1)\ (2,3)\ (2, -2)\ {\bf (-3,-2)}$} & {\small $(1)\ (-1)\ (2,-5,4)\ (-2,5,-4)\ (3,{\bf-6},-3,{\bf6})$} & $q$ \\
\hline
$(11)$ & {\small $(1,1)\ (2,1)\ (-3,1)\ (2,3)\ (2, -2)\ {\bf (-4,1)}$} & {\small $(1)\ (-1)\ (2,-5,4)\ (-2,5,-4)\ (3,-3)\ {\bf (6,-6)}$} & $1$\\
\hline
$(12)$ & {\small $(1,1)\ (2,1)\ (-3,1)\ (2,3)\ (2, -2)\ {\bf (4,1)}$} & {\small $(1)\ (-1)\ (2,-5,4)\ (-2,5,-4)\ (3,-3)\ {\bf (6)}\ {\bf (-6)}$} & $1$ \\
\hline
\end{tabular}

\medskip

Note that lines $(1)-(10)$ correspond to case (b)(2) in the proof of Proposition \ref{recursion q version} above, line $(11)$ corresponds to case (a), and line $(12)$ to case (b)(1).  
\end{example}


\subsubsection{The {\it q,r}-Stirling numbers of type {\it B}  of the first kind}

We define a $q,r$-analogue for the Stirling numbers  of type $B$ of the first kind, similarly to what we have done in type $A$ (see Section \ref{sec q,r-analogue type A} above).

\begin{definition}
The {\em $q,r$-Stirling number $s_{q}^B(n,k,r)$ of type $B$ of the first kind} is defined as follows:
$$s_{q}^B(n,k,r) := \sum\limits_{\omega \in R_{(i)}^{B}(n,k,r)} q^{{\rm finv} (\omega)}=\sum\limits_{\omega \in R_{(i)}^{B}(n,k,r)} q^{2{\rm inv}_B (\omega)+{\rm neg}(\omega)}.$$
\end{definition}

Next, we prove the following result regarding the $q,r$-analogue:

\begin{prop}\label{recursion q,r version}
For each $1 \leq r < k\leq n$,
\begin{equation}
s^B_{q}(n,k,r)=s^B_{q}(n-1,k-1,r)+\left(1+[2n-2]_q\right) \cdot s^B_{q}(n-1,k,r),
\label{recursion q,r stirling B}
\end{equation}
with the boundary conditions: $s_{q}^B(r,r,r)= 1,$
$s_{q}^B(n,k,r)=0$ \ \ for \ \ $0\leq k < r$, and $s^B_{q}(0,k,r)= \delta_{0k}$.
\end{prop}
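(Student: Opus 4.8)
The plan is to mirror the structure of the proof of Proposition~\ref{recursion q,r version second kind} for the second kind, exploiting the fact that the $q,r$-variant differs from the plain $q$-variant only by restricting the RG-words to a subset with a prescribed prefix. Concretely, $R_{(i)}^{B}(n,k,r)$ consists of those RG-words of type $B$ of the first kind whose first $r$ pairs are fixed to be $(-1,1)(-2,1)\cdots(-r,1)$. Since this prefix is frozen, the deletion map $f$ that removes the last pair $\omega_n$ restricts to a map
$$f:R_{(i)}^{B}(n,k,r) \rightarrow R_{(i)}^{B}(n-1,k,r) \cupdot R_{(i)}^{B}(n-1,k-1,r),$$
and for any word in the domain we have $n>r$ (otherwise there is no last pair to remove beyond the prefix, which is exactly the boundary regime handled separately). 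Thus the recursion argument of Proposition~\ref{recursion q version} applies verbatim: removing $\omega_n$ leaves a word that still begins with the required prefix, so it again lies in an $r$-restricted set.

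First I would state that the recurrence itself requires no new work, since the case analysis (a), (b)(1), (b)(2) from the proof of Proposition~\ref{recursion q version} depends only on the \emph{last} pair $\omega_n$ and on the relation $\preceq_{\rm abslex}$, neither of which is affected by fixing the first $r$ entries. The deleted prefix contributes no inversions among the new element and itself beyond what is already counted, and the computation ${\rm finv}(\omega)={\rm finv}(f(\omega))+2\ell+\varepsilon_n$ carries over unchanged, yielding the coefficient $1+[2n-2]_q$ in the second summand exactly as before. Hence I would simply invoke the earlier proof and say the recurrence is identical, directing all real attention to the boundary conditions.

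For the boundary conditions I would argue as follows. When $n=k=r$, the set $R_{(i)}^{B}(r,r,r)$ contains exactly one word, namely $\omega=(-1,1)(-2,1)\cdots(-r,1)$: the prefix constraint forces these $r$ pairs, all $i_t<0$ gives $k=r$ negative indices, and each $j_t=1$ forces ${\rm inv}_B(\omega)=0$ and ${\rm neg}(\omega)=0$, so ${\rm finv}(\omega)=0$ and the word contributes $q^0=1$; this corresponds under $\Phi^B$ to the signed permutation $(1,-1)(2,-2)\cdots(r,-r)$ with $r$ non-split singleton cycles. For $0\leq k<r$ the set is empty: the prefix already supplies $r$ distinct negative indices, so any word in $R_{(i)}^{B}(n,k,r)$ has at least $r$ negative $i_t$-values, forcing $k\geq r$; an empty index set makes the sum vanish. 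The final condition $s_q^B(0,k,r)=\delta_{0k}$ is a convention, as in the earlier propositions.

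The only genuine subtlety—hence the one I would flag as the main point requiring care—is verifying that the prefix restriction is preserved under $f$ and that it does not disturb the weight bookkeeping. Specifically, one must check that the frozen pairs $(-1,1),\dots,(-r,1)$ are all $\preceq_{\rm abslex}$-minimal with respect to any later pair $\omega_n$ that gets inserted (so that inserting $\omega_n$ never creates an inversion \emph{with} a prefix element in a way unaccounted for), and that removing $\omega_n$ for $n>r$ never touches the prefix. Both hold because $|i_t|\leq |i_n|$ for the prefix entries relative to any newly maximal index, and because $n>r$ guarantees $\omega_n$ lies strictly after the prefix. Once this is observed, the proof is complete, and I would write it as a short paragraph citing Proposition~\ref{recursion q version} for the recurrence and disposing of the three boundary cases directly.
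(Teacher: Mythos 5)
Your proof is correct and takes essentially the same route as the paper's: the recurrence is inherited from Proposition~\ref{recursion q version} (with the deletion map respecting the frozen prefix), and the boundary conditions are verified directly, with $s_{q}^B(r,r,r)=1$ coming from the single word $(-1,1)\cdots(-r,1)$ of weight $q^0$. Your only departure is cosmetic: where the paper dismisses $s_{q}^B(n,k,r)=0$ for $0\leq k<r$ as a convention, you derive it from the emptiness of $R_{(i)}^{B}(n,k,r)$, which is, if anything, slightly more careful.
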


\begin{proof}
We proceed directly to the boundary conditions, since the recursion is identical to the one given in Proposition \ref{recursion q version} above. 

Regarding the first boundary condition, the polynomial $s^B_{q}(r,r,r)$ is the generating function of the RG-word of type $B$ of the first kind $\omega=\omega_1\cdots \omega_r=(-1,1) \cdots (-r,1),$  
which is a single word contributing a weight $q^{0}=1$. 

The other boundary conditions $s_{q}^B(n,k,r)=0$ for $0\leq k < r$ and $s^B_q(0,k,r)= \delta_{0k}$ are conventions. 
\end{proof}

\subsubsection{{\it q}- and {\it q,r}-analogues of the presentation of Stirling numbers as transition coefficients}

Recall that Equation (\ref{generalized first kind def}) presents the Stirling numbers of type $B$ of the first kind as 
transition coefficients between two bases. Here we prove a $q$-analogue of Equation (\ref{generalized first kind def}) in a combinatorial way:

\begin{prop}\label{generating function of q analogue}
\begin{equation}
(t+1)\left(t+1+[2]_q\right)\left(t+1+[4]_q\right)\cdots \left(t+1+[2n-2]_q\right)=\sum\limits_{k=0}^n{s_q^B(n,k)\ t^k}\ .
\label{generalized first kind def q}
\end{equation}
\end{prop}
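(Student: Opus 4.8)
The plan is to prove the identity by induction on $n$, using the recursion for $s_q^B(n,k)$ established in Proposition \ref{recursion q version}. Denote the left-hand side product by $P_n(t) = \prod_{j=1}^{n-1}\left(t+1+[2j]_q\right)$, with the understanding that the $j=0$ factor is simply $(t+1)$ (so that $P_1(t)=t+1$ matching the single factor, and more generally the factors are $t+1, t+1+[2]_q, \dots, t+1+[2n-2]_q$). The goal is to show $P_n(t) = \sum_{k=0}^n s_q^B(n,k)\, t^k$.

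First I would verify the base case $n=0$ (empty product equal to $1$, matching $s_q^B(0,0)=1$) or $n=1$, where $P_1(t) = t+1 = s_q^B(1,0) + s_q^B(1,1)\,t$; here $s_q^B(1,1)=1$ corresponds to the single word $(-1,1)$ with ${\rm finv}=0$, and $s_q^B(1,0)=1$ corresponds to $(1,1)$, again with weight $1$. For the inductive step, I assume $P_{n-1}(t)=\sum_{k=0}^{n-1}s_q^B(n-1,k)\,t^k$ and observe that $P_n(t) = P_{n-1}(t)\cdot\left(t+1+[2n-2]_q\right)$. The strategy is to expand this product and match coefficients of $t^k$ against the recursion. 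Multiplying through gives
\begin{equation*}
P_n(t) = \left(\sum_{k=0}^{n-1}s_q^B(n-1,k)\,t^k\right)\cdot t + \left(1+[2n-2]_q\right)\left(\sum_{k=0}^{n-1}s_q^B(n-1,k)\,t^k\right).
\end{equation*}

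The coefficient of $t^k$ on the right-hand side is then $s_q^B(n-1,k-1) + \left(1+[2n-2]_q\right)s_q^B(n-1,k)$, which is exactly the right-hand side of the recursion in Equation (\ref{recursion q stirling B}). Thus the coefficient of $t^k$ in $P_n(t)$ equals $s_q^B(n,k)$ for all $1 \le k \le n$, completing the induction. I would need to check the extreme coefficients separately: the coefficient of $t^n$ is $s_q^B(n-1,n-1)=1=s_q^B(n,n)$ (consistent since the highest word contributes weight $1$), and the constant term $k=0$ requires care because the recursion in Proposition \ref{recursion q version} is stated only for $1\le k\le n$. For $k=0$ the identity reduces to matching the constant term of $P_n(t)$, namely $\prod_{j=0}^{n-1}\left(1+[2j]_q\right)$, against the boundary value $s_q^B(n,0)$.

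The main obstacle I anticipate is precisely this $k=0$ boundary term. The recursion does not directly deliver $s_q^B(n,0)$ from $s_q^B(n-1,-1)$ (which is $0$ by convention) and $\left(1+[2n-2]_q\right)s_q^B(n-1,0)$; instead, setting $k=0$ in the expanded product gives the constant term $\left(1+[2n-2]_q\right)\cdot s_q^B(n-1,0)$, and one must verify this matches the closed form $s_q^B(n,0)=\sum_{\ell=1}^{n}s_{q^2}^A(n,\ell)\,(1+q)^{n-\ell}$ from Proposition \ref{recursion q version}. I would reconcile these by confirming that the constant-term closed form itself satisfies the one-step recurrence $s_q^B(n,0)=\left(1+[2n-2]_q\right)s_q^B(n-1,0)$, which telescopes to the product $\prod_{j=0}^{n-1}\left(1+[2j]_q\right)$; recognizing that $1+[2j]_q = 1 + (1+q+\cdots+q^{2j-1})$ and that this telescoping product is consistent with the $q^2$-analogue type $A$ Stirling sum would close the gap. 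Once the constant term is verified, every coefficient of $t^k$ for $0\le k\le n$ matches $s_q^B(n,k)$, and the identity follows.
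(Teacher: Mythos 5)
Your proof is correct, but it takes a genuinely different route from the paper. The paper does not induct on the recursion at all: it expands the left-hand side of Equation (\ref{generalized first kind def q}) directly and interprets each summand --- a product of the factors $\left(1+[2j]_q\right)$ with exactly $k$ factors missing --- as the total ${\rm finv}$-weight of those RG-words of type $B$ of the first kind whose $k$ non-split cycles have prescribed minimal elements $\ell_u=\frac{a_u}{2}+1$, the contributions of the remaining letters factoring term by term. Your induction via Proposition \ref{recursion q version} is shorter and more mechanical once the recursion (\ref{recursion q stirling B}) is in hand, and you correctly isolate the one delicate point, the constant term, since the recursion is only stated for $1\leq k\leq n$. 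That gap closes cleanly in either of two ways: (i) observe that the combinatorial proof of Proposition \ref{recursion q version} applies verbatim when $k=0$ (case (a) there is vacuous, matching the convention $s_q^B(n-1,-1)=0$), giving $s^B_q(n,0)=\left(1+[2n-2]_q\right)s^B_q(n-1,0)$ directly; or (ii) verify, as you sketch, that the boundary closed form satisfies this one-step recurrence, which follows from the type $A$ recursion (\ref{recurrence A}) together with the identity $[n-1]_{q^2}\,(1+q)=[2n-2]_q$. Neither route is circular, since both the recursion and its boundary value are established independently of the proposition. What the two approaches buy: the paper's expansion argument yields finer, term-by-term combinatorial information (which set of signed permutations produces each summand), and this is exactly what gets reused for the $q,r$-version in Proposition \ref{generating function of q-r analogue prop}; your inductive argument is more elementary, and as a by-product your $k=0$ step proves the identity $\prod_{j=1}^{n-1}\left(1+[2j]_q\right)=\sum_{\ell=1}^{n}s_{q^2}^A(n,\ell)\,(1+q)^{n-\ell}$, which the paper instead derives afterwards as a corollary.
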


\begin{proof}
By Definition \ref{def q stirling type B first kind}, the coefficient of $t^k$ on the right-hand side of Equation (\ref{generalized first kind def q}) is given by collecting the contributions of the parameter ${\rm finv}(\omega)$ running through all the RG-words\break $\omega=(i_1,j_1) \cdots (i_n,j_n)$ of type $B$ of the first kind, such that there are exactly $k$ indices $m_1,\dots,m_k$ whose corresponding pairs satisfy $i_{m_u}<0$ and $j_{m_u}=1$ for $1 \leq u \leq k$, i.e. there are $k$ non-split cycles in the corresponding signed permutation.  

On the other hand, after expanding the left-hand side, we get an expression of the form $\sum\limits_{k=0}^n c_k(q) t^k$, where for each $k$, the coefficient $c_k(q)$ is a sum of products of the form $$1 \cdot \left(1+[2]_q\right) \cdot \left(1+[4]_q\right) \cdots \left(1+[2n-2]_q\right),$$ such that exactly $k$ terms from this product are missing. 

For each single product of this form, denote the $k$ missing terms by $$1+[a_1]_q\ ,\ \dots \ , \ 1+[a_k]_q$$ (where possibly $a_1=0$ and so $1+[0]_q=1$). 

Denote $\ell_u = \frac{a_u}{2}+1$ for $1 \leq u \leq k$ (note that $a_u$ is always even). Then, as will be demonstrated in Example \ref{example of proof for first kind} below, such a product is the sum of contributions of the parameter ${\rm finv}(\omega)$ of RG-words  
$\omega_1 \omega_2 \cdots \omega_n = (i_1,j_1)\ (i_2,j_2) \cdots (i_n,j_n)$ of type $B$ of the first kind 
having $i_{\ell_u}<0$ and $j_{\ell_u} = 1$ for all $1 \leq u \leq k$ (i.e. $\ell_u$ is the smallest element in a non-split cycle).
\end{proof}

\begin{example}\label{example of proof for first kind} For $n=8$, one of the summands contributing to the coefficient of $t^3$ in the product $(t+1)(t+1+[2]_q)\cdots (t+1+[14]_q)$ is: $$1 \cdot (1+[4]_q) \cdot (1+[6]_q) \cdot (1+[10]_q) \cdot (1+[14]_q)$$ (with three terms missing, corresponding to $a_1=2,\ a_2=8,\ a_3=12$). Therefore: $$\ell_1=\frac{a_1}{2}+1=2, \quad \ell_2=\frac{a_2}{2}+1=5 \ \mbox{  and }\ \ell_3=\frac{a_3}{2}+1=7.$$ 
We construct all RG-words $\omega_1 \cdots \omega_8=(i_1,j_1) \cdots (i_8,j_8)$
of type $B$ of the first kind satisfying 
$$i_2<0,\ i_5<0,\ i_7<0, \qquad j_2=j_5=j_7=1$$ 
and there is no other pair $(i_m,1)$ with $i_m<0$ for $m\neq 2,5,7$. 

This set of RG-words of type $B$ of the first kind corresponds to the set of signed permutations having three non-split cycles, such that
$2, 5, 7$ are the minimal elements of these non-split cycles.

\medskip

Now, we compute the contribution of the parameter ${\rm finv}(\omega)$ induced by the addition of the other pairs into the various possible RG-words:

\begin{itemize} 
\item The first pair $(i_1,j_1)$ must be $(1,1)$ by the requirements above, and therefore has no contribution. This corresponds to the number $1$ which is located as the minimal element of a split cycle. 

\item The second pair must be  $(i_2,j_2)=(-2,1)$, corresponding to $\ell_1=2$, and also has no contribution. 
This corresponds to the number $2$ which is located as the minimal element of a non-split cycle. 

\item The third pair $(i_3,j_3)$ can be one out of the following $5$ possibilities:
$$(1,2), (1,-2),(-2,2),(-2,-2),(3,1)$$
(corresponding to inserting the number $3$ in the second place of the first split cycle  as positive or negative element, in the second place of the second non-split cycle as positive or negative element, or in a new split cycle).
The different contributions of the various possibilities of the third pair are:  
$q^2,q^3,1,q,1$, respectively, whose sum is the term $1+[4]_q$.

\item The fourth pair $(i_4,j_4)$ can be one out of\ \ $7$ possibilities, corresponding to inserting the number $4$ after $1$ as positive or negative element, after $2$ as a positive or negative element, after $3$ as a positive or negative element, or in a new split cycle (here we cannot write it directly in the language of RG-words, since it depends on the choice of the pair $(i_3,j_3)$).
Nevertheless, the different contributions of the various possibilities of the forth pair sum to  
the term $1+[6]_q$.

\item Continuing in this manner, one can see that the total contribution to the generating function of the RG-words satisfying the requirements above is 
$1 \cdot (1+[4]_q) \cdot (1+[6]_q) \cdot (1+[10]_q) \cdot (1+[14]_q)$ as needed.
\end{itemize}

\end{example}

\begin{remark}
Note that a similar argument was used by Benjamin and Quinn \cite[p. 96]{BQ} and by the authors of the current paper \cite[Theorem 4.1]{monthly} in a different context.     
\end{remark}

Combining the first boundary condition of Proposition \ref{recursion q version} with the free coefficient  of Equation (\ref{generalized first kind def q})  yields the following identity: 
\begin{cor}
$$1 \cdot \left(1+[2]_q\right) \cdot \left(1+[4]_q\right) \cdots \left(1+[2n-2]_q\right)=\sum\limits_{\ell=1}^{n}s_{q^2}^A(n,\ell) \cdot (1+q)^{n-\ell}.$$
\end{cor}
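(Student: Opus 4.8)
The plan is to extract the constant term from the polynomial identity in Proposition \ref{generating function of q analogue} and then to identify that constant term in two different ways. Concretely, I would evaluate Equation (\ref{generalized first kind def q}) at $t=0$. On the right-hand side, every summand $s_q^B(n,k)\,t^k$ with $k\geq 1$ vanishes, so only the $k=0$ term survives, leaving exactly $s_q^B(n,0)$. On the left-hand side, setting $t=0$ in each factor $\left(t+1+[2j]_q\right)$ collapses the product to $1\cdot\left(1+[2]_q\right)\cdot\left(1+[4]_q\right)\cdots\left(1+[2n-2]_q\right)$, which is precisely the left-hand side of the claimed identity.

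Next, I would invoke the first boundary condition established in Proposition \ref{recursion q version}, namely $s^B_q(n,0)=\sum_{\ell=1}^{n}s_{q^2}^A(n,\ell)\cdot(1+q)^{n-\ell}$ for $n\geq 1$. Combining this expression for $s_q^B(n,0)$ with the one obtained in the previous step produces two formulas for the same quantity, and equating them yields the desired corollary directly.

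Since both ingredients — the transition-coefficient identity of Proposition \ref{generating function of q analogue} and the closed form for $s_q^B(n,0)$ coming from the boundary condition of Proposition \ref{recursion q version} — are already proved in the excerpt, there is essentially no remaining difficulty. The only point deserving a word of care is the observation that evaluating the polynomial identity at $t=0$ legitimately isolates the constant coefficient $s_q^B(n,0)$ on the right, which is immediate once one notes that the right-hand side is genuinely a polynomial in $t$ with the $s_q^B(n,k)$ as its coefficients. Thus the corollary follows purely by splicing together two previously established facts, and I do not anticipate any substantive obstacle.
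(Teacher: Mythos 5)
Your proposal is correct and matches the paper's own derivation exactly: the paper obtains this corollary by combining the free coefficient of Equation (\ref{generalized first kind def q}) (i.e., setting $t=0$ in Proposition \ref{generating function of q analogue}) with the first boundary condition $s^B_q(n,0)=\sum_{\ell=1}^{n}s_{q^2}^A(n,\ell)\cdot(1+q)^{n-\ell}$ from Proposition \ref{recursion q version}. Nothing is missing.
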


\medskip

Now, we pass to the $r$-variant:

\begin{prop}\label{generating function of q-r analogue prop}
\begin{equation}\label{generalized first kind def q,r}
\left(t+1+[2r]_q\right)\left(t+1+[2r+2]_q\right)\cdots \left(t+1+[2n-2]_q\right)=\sum\limits_{k=0}^{n-r}{s_{q}^B(n,r+k,r)\ t^k}\ .
\end{equation}
\end{prop}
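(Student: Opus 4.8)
The plan is to argue by induction on $n \geq r$, using the recursion for $s_q^B(n,k,r)$ established in Proposition \ref{recursion q,r version} together with its boundary conditions. This mirrors the way Proposition \ref{generating function of q analogue} repackages the first-kind recursion as a transition formula, but now with the first $r$ factors stripped off. It is convenient to write $P_{n,r}(t) := \prod_{i=r}^{n-1}\left(t+1+[2i]_q\right)$ for the left-hand side of Equation (\ref{generalized first kind def q,r}); the claim to be proved is that $P_{n,r}(t) = \sum_{k=0}^{n-r} s_q^B(n,r+k,r)\, t^k$. For the base case $n=r$, the product $P_{r,r}(t)$ is empty and hence equals $1$, while the right-hand side collapses to the single term $s_q^B(r,r,r)=1$ by the first boundary condition of Proposition \ref{recursion q,r version}, so the two sides agree.

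For the inductive step I would peel off the last factor and write $P_{n,r}(t) = P_{n-1,r}(t)\cdot\left(t+1+[2n-2]_q\right)$. Substituting the inductive hypothesis $P_{n-1,r}(t) = \sum_{k=0}^{n-1-r} s_q^B(n-1,r+k,r)\,t^k$ and expanding, the coefficient of $t^j$ in $P_{n,r}(t)$ becomes $s_q^B(n-1,r+j-1,r) + \left(1+[2n-2]_q\right) s_q^B(n-1,r+j,r)$, which is precisely the right-hand side of the recursion of Proposition \ref{recursion q,r version} for $s_q^B(n,r+j,r)$. Matching coefficients term by term then yields the desired identity.

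The one place requiring care, and the only genuine obstacle, is the behaviour at the two extreme coefficients, where the recursion of Proposition \ref{recursion q,r version} (valid a priori only for $r < k \leq n$) does not apply verbatim. For $j=0$ (the constant term, $k=r$) the shifted contribution $s_q^B(n-1,r-1,r)$ that would appear vanishes by the boundary condition $s_q^B(\,\cdot\,,k,r)=0$ for $k<r$, so the recursion still holds in the degenerate form $s_q^B(n,r,r) = \left(1+[2n-2]_q\right)s_q^B(n-1,r,r)$, matching the constant term of $P_{n,r}(t)$. Similarly, at $j=n-r$ (the leading term, $k=n$) the term $s_q^B(n-1,n,r)$ vanishes since $n>n-1$, leaving exactly $s_q^B(n-1,n-1,r)$. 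Once these two endpoints are checked against the corresponding products of factors, the coefficient comparison closes the induction.

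As an alternative that stays closer to the style of Proposition \ref{generating function of q analogue}, one could reprove the statement by the same direct combinatorial expansion, now restricting to RG-words of type $B$ of the first kind whose first $r$ pairs are forced to be $(-1,1)(-2,1)\cdots(-r,1)$ as in Definition \ref{def_RGB_kind1_ver_r}. This constraint forces the first $r$ factors of the full product to be extracted as distinguished non-split cycles, which both explains why the $q,r$-product begins at $\left(t+1+[2r]_q\right)$ and why the exponent of $t$ now counts only the $k$ non-split cycles beyond the $r$ distinguished ones; I expect the coefficient-matching in the induction above to be the shorter route, with this combinatorial picture serving to motivate the shape of the formula.
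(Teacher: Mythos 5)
Your induction is correct, but it is not the route the paper takes. The paper proves Proposition \ref{generating function of q-r analogue prop} by multiplying Equation (\ref{generalized first kind def q,r}) by $t^r$ and then repeating, essentially verbatim, the direct combinatorial expansion used for Proposition \ref{generating function of q analogue}: each summand obtained by expanding the product on the left-hand side is identified with the generating function, under ${\rm finv}$, of the $r$-RG-words of type $B$ whose non-split-cycle minima occupy prescribed positions (illustrated in Example \ref{example of proof for r first kind}) --- which is exactly the picture you relegate to your final paragraph as an ``alternative''. Your main argument instead runs induction on $n$ through the recursion of Proposition \ref{recursion q,r version}, and your attention to the two extreme coefficients is precisely the point that needs care: at $k=r$ the recursion does extend, because the case-(a) words in the proof of Proposition \ref{recursion q version} would be in weight-preserving bijection with $R^B_{(i)}(n-1,r-1,r)=\emptyset$, so the vanishing of the boundary term $s_q^B(n-1,r-1,r)$ correctly reflects the combinatorics rather than being a mere notational convention; at $k=n$ the term $s_q^B(n-1,n,r)$ vanishes trivially. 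As for what each approach buys: yours is shorter and arguably more rigorous once the recursion is granted (the paper's proof here is only sketched, leaning on an example), while the paper's expansion explains structurally why the factor $t+1+[2i]_q$ records the possible placements of the element $i+1$ --- either as the minimum of a new non-split cycle (the $t$), a new split cycle (the $1$), or an insertion into an existing cycle (the $[2i]_q$) --- which is the conceptual content behind the shape of the formula.
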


\begin{proof}
Multiplying Equation (\ref{generalized first kind def q,r}) by $t^r$ yields:
\begin{equation}
t^r\left(t+1+[2r]_q\right)\left(t+1+[2r+2]_q\right)\cdots \left(t+1+[2n-2]_q\right)=\sum\limits_{k=r}^{n}{s_{q}^B(n,k,r)\ t^k}\ .
\label{generalized first kind def q,r 1}
\end{equation}

The proof of this equality is very similar to the proof of Proposition \ref{generating function of q analogue}, see Example \ref{example of proof for r first kind} below. \end{proof}

\begin{example}\label{example of proof for r first kind} We continue with $n=8$, $k=4$ and $r=3$. One of the summands contributing to the coefficient of $t^4$ in the product $t^3(t+1+[6]_q)\cdots (t+1+[14]_q)$ is: $$ (1+[6]_q) \cdot (1+[8]_q) \cdot (1+[10]_q)\cdot (1+[14]_q)$$ (with one term missing, corresponding to $a_1=12$). Therefore: $\ell_1=\frac{a_1}{2}+1=7.$ 
We construct all the RG-words of type $B$ of the first kind $\omega_1 \cdots \omega_8=(-1,1)(-2,1)(-3,1)(i_4,j_4) \cdots (i_8,j_8)$ satisfying 
$i_7<0$ and $j_7=1$, 
and there is no other pair $(i_m,1)$ with $i_m<0$ for $m\neq 1,2,3,7$. 

This set of RG-words of type $B$ corresponds to the set of signed permutations having four non-split cycles, such that
$1,2, 3, 7$ are their corresponding minimal elements.

\medskip

Now, we compute the contribution of the parameter ${\rm finv}(\omega)$ induced by the addition of the other pairs into the various possible RG-words, where we start from the fourth pair, since the first three pairs are already determined:

\begin{itemize} 
\item The fourth pair $(i_4,j_4)$ can be one out of\ \ $7$ possibilities, corresponding to inserting the number $4$ after $1$, $2$ or $3$ as a positive or a negative element, or in a new split cycle.
The different contributions of the various possibilities sum to  
the term $1+[6]_q$.

\item The fifth pair $(i_5,j_5)$ can be one out of\ \ $9$ possibilities, corresponding to inserting the number $5$ after $1$, $2$, $3$ or $4$ as a positive or a negative element, or in a new split cycle.
The different contributions of the various possibilities  sum to  
the term $1+[8]_q$.

\item Continuing in this manner, one can see that the total contribution to the generating function of the RG-words satisfying the requirements above is 
$1 \cdot (1+[6]_q) \cdot (1+[8]_q) \cdot (1+[10]_q) \cdot (1+[14]_q)$ as needed.
\end{itemize}

\end{example}

\section{Additional variants of the statistic finv}
\label{sec additional variant finv}

A drawback of the parameter finv 
over signed permutations that was defined in Definition \ref{def finv} is that the coefficient $1+[2n-2]_q$ in the recursion (\ref{recursion q stirling B}) and in the generating function in Equation (\ref{generalized first kind def q}) is less natural than the ordinary coefficient $[2n-1]_q$, which appears more often in the literature. Moreover, this ordinary coefficient will be needed in order to get orthogonality and sum of powers results in Section \ref{sec orthogonality} below.  

We suggest two different solutions to this problem: the first is a slight variation of our parameter which fixes the coefficient (see Section \ref{sec slight variation}) and the other is a reformulation of a different parameter which has been presented first by Sagan and Swanson \cite{SaSw} (see Section \ref{Sec SaSw parameter}). 

\subsection{A slight variation of the finv parameter}\label{sec slight variation}

In order to switch from the coefficient  
$1+[2n-2]_q$ to the coefficient $[2n-1]_q$ 
in the recursion (\ref{recursion q stirling B}) and in the generating function in Equation (\ref{generalized first kind def q}), we define here the following auxiliary parameter:
\begin{definition}\label{def sfinv}
Let $\omega=\omega_1\cdots \omega_n\in R_{(i)}^B(n,k)$. Define 
$${\rm nl}(\omega)=\#\left\{\omega_t = (i_t,j_t)\mid j_t \neq 1 \right\}.$$
 
Define the {\em shifted flag-inversion} of an RG-word $\omega$ as follows: $${\rm sfinv}(\omega):={\rm finv}(\omega)+{\rm nl}(\omega)
=2 {\rm inv}_B(\omega)+{\rm neg}(\omega)+{\rm nl}(\omega).$$
\end{definition}

In the language of signed permutations decomposed into cycles, if $\pi \in B_n$ is the signed permutation corresponding to the RG-word  $\omega$, then the parameter ${\rm nl}(\omega)$ equals  $n$ minus the number of representative cycles (of both types) in $\pi$. Adding this parameter to finv actually contributes an additional multiplication by $q$ to each element which is not minimal in its cycle (corresponding to case (b)(2) inside the proof of Proposition \ref{recursion q version}), and hence `corrects' the coefficient 
$1+[2n-2]_q$ to the coefficient $[2n-1]_q$.

\begin{example}\label{example sfinv}
(a) Given the signed permutation of Example \ref{example of a signed permutation}
$$\pi=(1,-7)(-1,7)(2,-5,4,-9,-2,5,-4,9)(3,8,-3,-8)(6,-6),$$
where we got \ ${\rm finv}(\omega)=27$.
Note that ${\rm nl}(\omega)=9-4=5$, so we have ${\rm sfinv}(\omega)=27+5=32$.

\medskip

\noindent
(b) In Example \ref{example with table}, we listed the changes in the contribution of the various possibilities of inserting $\omega_{6}=(i_6,j_6)$ into the RG-word of type $B$ of the first kind
$$\omega=\underbrace{(1,1)}_{\omega_1}\underbrace{(2,1)}_{\omega_2}\underbrace{(-3,1)}_{\omega_3}\underbrace{{(2,3)}}_{\omega_4}\underbrace{(2,-2)}_{\omega_5},$$
which corresponds to the signed permutation:
$$(1)\ (-1)\ (2,-5,4)\ (-2,5,-4)\ (3,-3).$$

Using the new parameter, the contribution of the first 10 lines of the table appearing in this example will be multiplied by $q$, since the new additional element $6$ is placed in a non-leading location. Therefore, the sum of the contributions will be the coefficient 
$1+[11]_q$, as needed.
\end{example}

This leads us to the following definition of the $q$-shifted-Stirling number:
\begin{definition}\label{def q shifted stirling type B first kind}
We define the {\em $q$-shifted-Stirling number $ss_q^B(n,k)$ of type $B$ of the first kind} as follows:
$$ss_q^B(n,k) :=\sum\limits_{\omega \in R_{(i)}^B(n,k)}{q^{{\rm sfinv}(\omega)}}=\sum\limits_{\omega \in R_{(i)}^B(n,k)} q^{2{\rm inv}_B (\omega)+{\rm neg}(\omega)+{\rm nl}(\omega)}.$$
\end{definition}

Then we have the following recursion:
\begin{prop}
For each $1 \leq k\leq n$,
\begin{equation}
ss^B_q(n,k)=ss^B_q(n-1,k-1)+[2n-1]_q \cdot ss^B_q(n-1,k),
\label{recursion q shifted stirling B}
\end{equation}
with the boundary conditions: $$ss^B_q(n,0)=[1]_q [3]_q \cdots [2n-1]_q$$ and $ss^B_q(0,k)= \delta_{0k}.$
\end{prop}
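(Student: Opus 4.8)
The plan is to follow the proof of Proposition~\ref{recursion q version} as far as the analysis of ${\rm finv}$ is concerned, and to superimpose on it the bookkeeping of the extra term ${\rm nl}$. This is efficient because ${\rm sfinv}={\rm finv}+{\rm nl}$, and the behaviour of ${\rm finv}$ under the deletion map $f(\omega_1\cdots\omega_n)=\omega_1\cdots\omega_{n-1}$ has already been settled there. First I would recall the three cases of that proof: (a) the last pair $\omega_n$ opens a new non-split cycle ($j_n=1$, $i_n<0$), sending $f(\omega)$ into $R_{(i)}^B(n-1,k-1)$; (b)(1) $\omega_n$ opens a new split cycle ($j_n=1$, $i_n>0$); and (b)(2) $\omega_n$ is inserted into an existing cycle (forcing $|j_n|\geq 2$). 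In both sub-cases of (b), $f(\omega)$ lands in $R_{(i)}^B(n-1,k)$.

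The crucial point is that ${\rm nl}$ counts exactly the pairs whose second coordinate is not $1$, so its change under $f$ is controlled solely by $\omega_n$. In cases (a) and (b)(1) we have $j_n=1$, so ${\rm nl}(\omega)={\rm nl}(f(\omega))$ and hence ${\rm sfinv}(\omega)={\rm sfinv}(f(\omega))$; these cases therefore contribute precisely as in Proposition~\ref{recursion q version}, giving the summand $ss^B_q(n-1,k-1)$ with coefficient $1$ and a coefficient $1$ towards $ss^B_q(n-1,k)$. In case (b)(2) we have $j_n\neq 1$, so ${\rm nl}$ increases by exactly one. Here I would verify the one genuinely new point: the index shift applied in case (b)(2) to the pairs lying after $\pm n$ in its cycle only increases their location indices in absolute value, and those pairs already satisfy $|j_k|\geq|j_n|\geq 2$, so they are counted by ${\rm nl}$ both before and after the shift. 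Thus the net increase of ${\rm nl}$ is the single unit coming from $\omega_n$ itself.

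Consequently the contribution of case (b)(2) is multiplied by $q$ relative to Proposition~\ref{recursion q version}: the coefficient $[2n-2]_q$ found there becomes $q\,[2n-2]_q=q+q^2+\cdots+q^{2n-2}$. Adding the coefficient $1$ supplied by case (b)(1) produces $1+q+\cdots+q^{2n-2}=[2n-1]_q$, which is exactly the coefficient demanded by recursion~(\ref{recursion q shifted stirling B}).

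For the boundary conditions, I would specialize this case analysis to $k=0$. Opening a non-split cycle (case (a)) would raise the number of non-split cycles above $0$, so it cannot occur, and the argument collapses to $ss^B_q(n,0)=[2n-1]_q\cdot ss^B_q(n-1,0)$. Iterating from the base value $ss^B_q(0,0)=1$ then yields $ss^B_q(n,0)=[1]_q[3]_q\cdots[2n-1]_q$, while $ss^B_q(0,k)=\delta_{0k}$ is a convention. The only step not purely inherited from Proposition~\ref{recursion q version} is the invariance of ${\rm nl}$ under the location-index shift in case (b)(2), flagged above; this is the main (and only mild) obstacle, and it is resolved by the observation that the shifted entries have $|j_k|\geq 2$ throughout.
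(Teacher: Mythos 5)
Your proof is correct, and for the recursion itself it is essentially the paper's own argument: the paper likewise reruns the case analysis (a), (b)(1), (b)(2) from Proposition \ref{recursion q version} and replaces Equation (\ref{eqn case b2}) by ${\rm sfinv}(\omega)={\rm sfinv}(f(\omega))+2\ell+\varepsilon_n+1$, which turns the coefficient $1+[2n-2]_q$ into $1+q\,[2n-2]_q=[2n-1]_q$. Your explicit check that the location-index shift in case (b)(2) leaves ${\rm nl}$ unchanged (the shifted pairs have second coordinate of absolute value at least $2$ both before and after) is a point the paper uses silently, so flagging it strengthens rather than alters the argument. Where you genuinely diverge is the first boundary condition: the paper obtains $ss^B_q(n,0)=[1]_q[3]_q\cdots[2n-1]_q$ by a forward reference to Proposition \ref{generating function of q analogue for the variation} (setting $t=0$ in the product formula), with an explicit remark that that proposition's proof is independent of the present one to preclude circularity; you instead derive it directly by specializing the case analysis to $k=0$ --- case (a) cannot occur, so $ss^B_q(n,0)=[2n-1]_q\, ss^B_q(n-1,0)$, and iterating down to $ss^B_q(0,0)=1$ yields the product. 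Your route is self-contained within the proposition and avoids the forward dependence, at the cost of repeating the case analysis once more; the paper's route gets the boundary value as a one-line specialization of an identity it intends to prove anyway. Both are valid.
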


\begin{proof}
The proof of the recursion goes verbatim as the proof of the recursion in Proposition \ref{recursion q version}, except for the case (b)(2) which leads to Equation (\ref{eqn case b2}), that in our context should be replaced by the following equation:
\begin{eqnarray*}
{\rm sfinv}(\omega)&=&2{\rm inv}_B(\omega)+{\rm neg}(\omega) + {\rm nl} (\omega)=\\
&=&2{\rm inv}_B(f(\omega))+2\ell+{\rm neg}(f(\omega)) +\varepsilon_n  + {\rm nl}(f(\omega))+1= \\
&=& {\rm sfinv}(f(\omega)) +2\ell +\varepsilon_n+1,
\end{eqnarray*}
and hence the corresponding coefficient  in the recursion changes to $[2n-1]_q$.

\medskip

The first boundary condition is based on  Proposition \ref{generating function of q analogue for the variation} stated below, for the case $k=0$, whose proof is independent of the current result.
The second boundary condition $s^B_q(0,k)= \delta_{0k}$ is a convention. 
\end{proof}

The presentation of the variation of our $q$-analogue $ss_q^B(n,k)$ as transition coefficients between two bases takes the following form, the proof of which is very similar to the proof of Proposition \ref{generating function of q analogue} above:

\begin{prop}\label{generating function of q analogue for the variation}

\begin{equation}\label{eqn generating function of q analogue for the variation}
(t+1)\left(t+[3]_q\right)\left(t+[5]_q\right)\cdots \left(t+[2n-1]_q\right)=\sum\limits_{k=0}^n{ss_q^B(n,k)\ t^k}\ .    
\end{equation}
\end{prop}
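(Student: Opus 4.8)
The plan is to prove Equation (\ref{eqn generating function of q analogue for the variation}) by mimicking the combinatorial argument used in the proof of Proposition \ref{generating function of q analogue}, but now tracking the statistic ${\rm sfinv}$ instead of ${\rm finv}$. By Definition \ref{def q shifted stirling type B first kind}, the coefficient of $t^k$ on the right-hand side collects the contributions $q^{{\rm sfinv}(\omega)}$ over all RG-words $\omega=(i_1,j_1)\cdots(i_n,j_n)$ of type $B$ of the first kind having exactly $k$ indices $m$ with $i_m<0$ and $j_m=1$, i.e.\ signed permutations with exactly $k$ non-split cycles. The heart of the argument is the same factorization: expanding the left-hand side gives $\sum_k c_k(q)t^k$ where each summand of $c_k(q)$ is a product of the factors $1,[3]_q,[5]_q,\dots,[2n-1]_q$ with exactly $k$ of them deleted, and I must match each such product to the ${\rm sfinv}$-generating function of a family of RG-words whose non-split-cycle minima sit at the $k$ positions $\ell_u$ corresponding to the deleted factors.

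First I would fix a subset of $k$ positions $\{\ell_1<\cdots<\ell_k\}$ and consider all RG-words $\omega$ for which $i_{\ell_u}<0$, $j_{\ell_u}=1$ for each $u$, and no other pair is of the form $(i_m,1)$ with $i_m<0$; these are exactly the signed permutations whose non-split cycles have minima $\ell_1,\dots,\ell_k$. Next I would run through the positions $t=1,\dots,n$ one at a time and compute the contribution to $q^{{\rm sfinv}(\omega)}$ of inserting the pair $\omega_t$, exactly as in Example \ref{example of proof for first kind}, but now each insertion that places $t$ in a non-leading location carries the extra factor of $q$ coming from the ${\rm nl}$ term in Definition \ref{def sfinv}. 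The key local computation is that, at a position $t$ which is \emph{not} one of the designated minima $\ell_u$, summing over all ways to insert $t$ (into any existing cycle as a positive or negative element, giving the two contributions differing by the ${\rm neg}$-bit, or opening a new split cycle with contribution $1$) yields precisely the factor $[2t-1]_q$, rather than the $1+[2t-2]_q$ obtained for ${\rm finv}$. This is the same correction already illustrated in Example \ref{example sfinv}(b): the ${\rm nl}$-shift multiplies the ten case-(b)(2) contributions by $q$, collapsing $1+[2t-2]_q$ into $[2t-1]_q$.

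The main obstacle, as in Proposition \ref{generating function of q analogue}, is bookkeeping the correspondence between a given monomial in the expansion of the left-hand side and the set of RG-words it enumerates: one must verify that deleting the factor indexed by $\ell_u$ on the left corresponds exactly to forcing position $\ell_u$ to be a non-split-cycle minimum (contributing $1$, hence no factor), while keeping a factor $[2t-1]_q$ corresponds to position $t$ being a free insertion. I expect this to require the observation that $a_u$ even gives $\ell_u=\tfrac{a_u}{2}+1$ and hence a clean bijection between deleted factors and designated positions, so that summing over all choices of the $k$ deleted factors on the left matches summing over all choices of the $k$ non-split-cycle minima on the right. Once this indexing is set up, the equality of the two sides follows termwise, since for each fixed choice of positions the product of the per-position local contributions equals the corresponding product of surviving factors $[2t-1]_q$.

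Finally I would note that the case $k=0$ of Equation (\ref{eqn generating function of q analogue for the variation}) specializes to the free coefficient $ss_q^B(n,0)=[1]_q[3]_q\cdots[2n-1]_q$, which is exactly the first boundary condition cited in the preceding recursion; since the present proof is self-contained and does not invoke that recursion, it legitimately supplies the boundary value. I do not expect to need any specialization of symmetric functions here, as the argument is purely a weighted insertion count, deferring such machinery to Section \ref{sec orthogonality}.
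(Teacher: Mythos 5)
Your proposal is correct and follows essentially the same route as the paper: the paper proves this proposition by declaring it ``very similar to the proof of Proposition~\ref{generating function of q analogue}'', and your argument is precisely that adaptation, with the key local computation (the ${\rm nl}$-shift turning the per-position contribution $1+[2t-2]_q$ into $[2t-1]_q$) carried out correctly. Your closing remark that the argument is independent of the recursion~(\ref{recursion q shifted stirling B}) is also the right observation, since the paper uses this proposition to supply that recursion's boundary condition and circularity must be avoided.
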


\subsection{The Sagan-Swanson inversion parameter}\label{Sec SaSw parameter}

Sagan and Swanson \cite[Definition 3.15]{SaSw} presented a different $q$-analogue for the Stirling number of type $B$ of the first kind, whose generating function is the left-hand side of Equation (\ref{eqn generating function of q analogue for the variation}) and satisfies recursion (\ref{recursion q shifted stirling B}).   
Their parameter was defined in the context of signed permutations as the number of inversions of the full presentation as a product of signed cycles using their standard form. 
After presenting their standard form and parameter in Definition \ref{def SaSw parameter}, we suggest a different view of this parameter in the style of flag-statistics (see e.g. \cite{AR}), by using the representative cycles defined above and decomposing the set of inversions into a sum of parameters, based on this presentation. 

\begin{definition}[Sagan-Swanson's parameter \cite{SaSw}]\label{def SaSw parameter}
Given a signed permutation $\pi=c_1 \cdots c_k$, the Sagan-Swanson standard form of $\pi$, written in its full presentation, satisfies the following conditions, where $m_i= \min |c_i|$ \ for each $1\leq i \leq k$:
\begin{enumerate}
\item $m_1 \leq m_2 \leq \cdots \leq m_k$,
\item If $m_i=m_{i+1}$, then $-m_i\in c_i$ and $m_i \in c_{i+1}$,
\item Each $c_i$ is written with its $\pm m_i$
in the last position, with non-split cycles ending in $-m_i$. 
\end{enumerate}

Note that Sagan and Swanson use the terms `paired' and 'unpaired' for what we call here `split' and `non-split' respectively. 

\medskip

In this context, they let $w = w_1w_2 \cdots w_{2n}$ be the
word obtained by removing the parentheses from the standard form of $\pi$, and define the set of inversions of $\pi$ to be
$${\rm Inv} (\pi) = \{(i, j) \mid i < j \mbox { and } w_i > |w_j|\},$$
with the corresponding inversion number
${\rm inv}(\pi)  = \# \ {\rm Inv} (\pi)$.
\end{definition}

\begin{example}\label{example SaSw inv part 1}
The signed permutation we encountered in Example \ref{example of a signed permutation}
$$\pi=(1,-7)(-1,7)(2,-5,4,-9,-2,5,-4,9)(3,8,-3,-8)(6,-6),$$
is written in Sagan-Swanson's standard form as
$$\pi=(7,-1)(-7,1)(5,-4,9,2,-5,4,-9,-2)(-8,3,8,-3)(6,-6).$$
Then we have: $$w =\underbrace {7}_{(1)},\underbrace{-1}_{(2)},\underbrace{-7}_{(3)},\underbrace{1}_{(4)},\underbrace{5}_{(5)},\underbrace{-4}_{(6)},\underbrace{9}_{(7)},\underbrace{2}_{(8)},\underbrace{-5}_{(9)},\underbrace{4}_{(10)},\underbrace{-9}_{(11)},\underbrace{-2}_{(12)},\underbrace{-8}_{(13)},\underbrace{3}_{(14)},\underbrace{8}_{(15)},\underbrace{-3}_{(16)},\underbrace{6}_{(17)},\underbrace{-6}_{(18)}, $$
and thus $${\rm Inv}(\pi)=\left\{\begin{array}{l}\underbrace{\textcolor{red}{(1,2),(1,4)}}_{(b)},\underbrace{\textcolor{blue}{(1,5),(1,9),(1,6),(1,10),(1,8),(1,12),(1,14),(1,16)}}_{(a)},\\
\underbrace{\textcolor{blue}{(1,17),(1,18)}}_{(a)},\underbrace{\textcolor{red}{(5,6),(5,10),(5,8),(5,12)}}_{(b)},\underbrace{\textcolor{blue}{(5,14),(5,16)}}_{(a)},\\
\underbrace{\textcolor{red}{(7,8),(7,12)}}_{(b)},\underbrace{(7,9),(7,10)}_{(d)},
\underbrace{\textcolor{blue}{(7,13),(7,15),(7,14),(7,16),(7,17),(7,18)}}_{(a)},\\
\underbrace{\textcolor{blue}{(10,14),(10,16)}}_{(a)},
\underbrace{\textcolor{purple}{(10,12),(15,16)}}_{(c)},\underbrace{\textcolor{blue}{(15,17),(15,18)}}_{(a)}\end{array} \right\}$$
so that ${\rm inv}(\pi)=34$
(where the meaning of the division into different categories will be explained later).
\end{example}

We want to present Sagan-Swanson's parameter ${\rm inv}$ as a flag-statistic, and for that 
we define the {\it shortened form} of (the standard form of) $\pi$, to be the list obtained by taking only the first half in each cycle (which can be considered as the representative cycle of this presentation). Denoting the resulting word by $\sigma$, we consider also its absolute word $|\sigma|$, which is the word obtained from $\sigma$ by forgetting the signs of its elements. 

In our running example (see Example \ref{example SaSw inv part 1}), we have: 
$$\sigma=7,-1,5,-4,9,2,-8,3,6 \ \ \ \mbox{   and   } \ \ \ |\sigma|=7,1,5,3,9,2,8,3,6.$$ 

\medskip

Now let us define the following four subsets:

\begin{itemize}
\item $A= \{ (i,j) : i<j, \ |\sigma|(i) > |\sigma|(j), \mbox{ $\sigma(i)$ and $\sigma(j)$ are in different cycles of $\pi$}\}$,

\item $B= \{ (i,j) : i<j, \ |\sigma|(i) > |\sigma|(j), \mbox{ $\sigma(i)$,$\sigma(j)$ are in the same cycle of $\pi$ }, \sigma(i)>0 \}$,

\item $C= \{ (i,j) : i<j, \ |\sigma|(i) > |\sigma|(j), \mbox{ $\sigma(i)$,$\sigma(j)$ are in the same cycle of $\pi$ }, \sigma(i)<0 \}$,

\item $D= \{ (i,j) : i<j, \ \sigma(i) < \sigma(j), \mbox{ $\sigma(i)$,$\sigma(j)$ are in the same cycle of $\pi$} \}$.
\end{itemize}

Based on these subsets, define the following four parameters:
$$p_A= |A|,\ p_B= |B|,\ p_C= |C|,\ p_D= |D|.$$

\medskip

We have the following result, that presents Sagan-Swanson's parameter inv as a flag-statistic:
\begin{prop}
Let $\pi$ be a signed permutation. Then:
 $${\rm inv}(\pi)=2(p_A+p_B)+(p_C+p_D).$$   
\end{prop}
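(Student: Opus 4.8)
The plan is to show that the Sagan--Swanson inversion set $\mathrm{Inv}(\pi)$ decomposes, up to the weighting indicated in the statement, into contributions governed by the four parameters $p_A, p_B, p_C, p_D$ of the shortened form $\sigma$. The key structural observation is that the full word $w = w_1 \cdots w_{2n}$ is built from $\sigma$ together with its ``second half'' in each cycle: for a split cycle $c_i$ (with $c_i \neq -c_i$), the mate $-c_i$ appears as a separate block, while for a non-split cycle the second half is the negation of the first half appended within the same block. Thus every position of $w$ is either a position of $\sigma$ or the negation of some position of $\sigma$. My first step would be to classify the pairs $(i,j) \in \mathrm{Inv}(\pi)$ (as counted in $w$) according to whether each of $w_i, w_j$ lives in the ``first half'' ($\sigma$-part) or the ``second half'' ($-\sigma$-part) of its cycle, and whether the two entries lie in the same cycle or in different cycles. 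This is exactly the division into categories $(a),(b),(c),(d)$ foreshadowed in Example~\ref{example SaSw inv part 1}.

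Next I would carry out a careful bookkeeping of how each $\sigma$-level inversion lifts to $w$-level inversions. The guiding principle is that an inversion recorded at the level of $\sigma$ (i.e. a pair counted by $p_A$, $p_B$, $p_C$, or $p_D$) typically appears \emph{twice} in $\mathrm{Inv}(\pi)$ because of the symmetry $w \mapsto -w$ on the second halves — once in the first-half copy and once in the negated copy — which accounts for the factor of $2$ on $p_A + p_B$. For the different-cycle pairs, a first-half/first-half inversion and its first-half/second-half shadow both survive, giving the doubling on $p_A$; for same-cycle positive-leading pairs one gets the analogous doubling producing $2p_B$. By contrast, the parameters $p_C$ (same cycle, negative leading entry) and $p_D$ (same cycle, $\sigma$-increasing with respect to \emph{signed} value) each contribute only once, because within a single cycle the intra-cycle comparisons against the condition $w_i > |w_j|$ are asymmetric: negating the leading element flips exactly one of the two mirror pairs into a non-inversion, so their net contribution is $p_C + p_D$ rather than $2(p_C+p_D)$. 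I would verify this tallies against the worked example, where $2(p_A + p_B) + (p_C + p_D)$ should reproduce $\mathrm{inv}(\pi) = 34$.

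The main obstacle I anticipate is the precise combinatorial accounting within a single cycle, i.e.\ disentangling the contributions of $p_C$ and $p_D$ and confirming they are counted exactly once each rather than twice. The subtlety is that the condition defining $\mathrm{Inv}(\pi)$ is the asymmetric ``$w_i > |w_j|$'' (comparing a signed entry to an absolute value), which does not behave symmetrically under negating either coordinate. I would handle this by fixing a single cycle $c = (w_{a}, w_{a+1}, \dots)$ in Sagan--Swanson standard form, writing out both its halves explicitly, and comparing, for each unordered pair of absolute values occurring in the cycle, how many of the four possible sign patterns (first/first, first/second, second/first, second/second) yield a genuine inversion under the rule $w_i > |w_j|$. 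The claim is that pairs with the larger-absolute-value entry carrying a positive sign in the first half contribute to $2p_B$, pairs with it carrying a negative sign contribute to $p_C$, and the ``reversed signed order'' pairs inside the cycle contribute to $p_D$; summing these sign-pattern counts should reproduce the stated coefficients. Once the single-cycle case and the different-cycle case (the cleaner doubling giving $2p_A$) are each settled, summing over all cycles and all pairs of cycles completes the proof.

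Throughout I would lean on the standard-form conventions of Definition~\ref{def SaSw parameter}, in particular that each cycle is written with $\pm m_i$ in the last position and non-split cycles ending in $-m_i$, since these conventions are what make the halves of $w$ line up cleanly with $\sigma$ and $-\sigma$ and what fix the signs in the boundary comparisons that distinguish the $p_C$ and $p_D$ contributions.
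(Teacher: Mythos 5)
Your proposal is correct and follows essentially the same route as the paper's proof: the paper forms the multiset consisting of two copies of each pair in $A$ and $B$ and one copy of each pair in $C$ and $D$, and matches it to ${\rm Inv}(\pi)$ by exactly the case analysis you outline (the positive instance of the larger-absolute-value entry precedes both instances of the smaller one, giving the doubling for $A$- and $B$-pairs; same-cycle pairs with negative leading entry or with increasing signed values yield a single inversion coming from the deleted half). The single-cycle bookkeeping you flag as the main obstacle is precisely where the paper's proof splits case $D$ into four sign sub-cases, so your planned sign-pattern enumeration is the same verification the paper carries out.
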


\begin{proof}
Define the multiset $M(\pi)$, whose elements are two copies of the elements of the set $A$, two copies of the elements of the set $B$, and the elements of the sets $C$ and $D$. 
The proof will be complete as soon as we point out a bijection between the sets $M(\pi)$ and ${\rm Inv}(\pi)$.  

\medskip

We start by showing that each element of $M(\pi)$ corresponds to an element of ${\rm Inv}(\pi)$. We divide in four cases according to the sets $A,B,C,D$:
\begin{itemize}
\item If $(i,j)\in A$, then $|\sigma|(i)>|\sigma|(j)$ where $\sigma(i)$ and $\sigma(j)$ are in difference cycles of $\pi$. In this case, exactly one of the instances of $\sigma(i)$ is positive, and it appears before the positive and negative instances of $|\sigma|(j)$, and thus contributes two inversions to ${\rm Inv}(\pi)$ (e.g. pairs $(1,5)$ and $(1,9)$ in Example \ref{example SaSw inv part 1} above). 

\item If $(i,j)\in B$, then $|\sigma|(i)>|\sigma|(j)$ where $\sigma(i)$ and $\sigma(j)$ are in the same cycle of $\pi$, and $\sigma(i)>0$. In this case,  the positive instance of $\sigma(i)$  appears again before the positive and negative instances of $|\sigma|(j)$ (since one instance of $|\sigma|(j)$ appears after $\sigma(i)$ and the second instance appears in the deleted half of the cycle). Thus, it contributes two inversions to ${\rm Inv}(\pi)$ (e.g. pairs $(1,2)$ and $(1,4)$ in Example \ref{example SaSw inv part 1} above). 

\item If $(i,j)\in C$, then $|\sigma|(i)>|\sigma|(j)$ where $\sigma(i)$ and $\sigma(j)$ are in the same cycle of $\pi$, and $\sigma(i)<0$. In this case,  the negative instance of $\sigma(i)$ does not contribute to ${\rm Inv}(\pi)$, but its positive instance (appearing in the deleted half of the cycle) appears before an instance of  $|\sigma|(j)$, and thus contributes one inversion to ${\rm Inv}(\pi)$ (e.g. the pair $(10,12)$ in Example \ref{example SaSw inv part 1} above). 

\item If $(i,j)\in D$, then $\sigma(i)<\sigma(j)$ where $\sigma(i)$ and $\sigma(j)$ are in the same cycle of $\pi$. In this case, we have to split our treatment into four sub-cases (note that the case $\sigma(i)>0$ and $\sigma(j)<0$ is impossible, since by the assumption of this case we have $\sigma(i)<\sigma(j)$):
\begin{itemize}
\item If $\sigma(i)>0$ and $\sigma(j)>0$, then $\sigma(j)>|\sigma(i)|$, and thus $\sigma(j)$ appearing in the first half and $-\sigma(i)$ appearing afterwards in the deleted half contribute one inversion to ${\rm Inv}(\pi)$ (e.g. the pair $(7,9)$ in Example \ref{example SaSw inv part 1} above).  

\item If $\sigma(i)<0$ and $\sigma(j)<0$, then we have $|\sigma(j)|<|\sigma(i)|$. Therefore, in the deleted half of the cycle, $-\sigma(i)>0$ appears before   $-\sigma(j)>0$, and thus contribute one inversion to ${\rm Inv}(\pi)$.

\item If $\sigma(i)<0$, $\sigma(j)>0$ and $|\sigma(i)|<|\sigma(j)|=\sigma(j)$, then $\sigma(j)$ appearing in the first half and $-\sigma(i)$ appearing afterwards in the deleted half contribute one inversion to ${\rm Inv}(\pi)$ (e.g. the pair $(7,10)$ in Example \ref{example SaSw inv part 1} above).   

\item If $\sigma(i)<0$, $\sigma(j)>0$ and $|\sigma(i)|>|\sigma(j)|$, then in the deleted half of the cycle, we have $-\sigma(i) > -\sigma(j)$, so this situation contributes one inversion to ${\rm Inv}(\pi)$.  
\end{itemize}
\end{itemize}

Now we show that any other pairs $(i,j)$, which do not satisfy any of the conditions defining the subsets $A,B,C,D$, do not contribute any inversions to ${\rm Inv} (\pi)$.
We divide our treatment into two cases:
\begin{itemize}
\item If the pair $(i,j)
$ satisfies that $|\sigma|(i) < |\sigma|(j)$, where $\sigma(i)$ and $\sigma(j)$ are in difference cycles of $\pi$ (e.g. the pairs $(2,5)$ and $(2,9)$ in Example \ref{example SaSw inv part 1} above), then it does not contribute to ${\rm Inv} (\pi)$, since the definition of ${\rm Inv} (\pi)$ takes into account the absolute value of the second element in the pair.
\item If the pair $(i,j)$ satisfies that $\sigma(i)$ and $\sigma(j)$ are in the same cycle, $|\sigma|(i)<|\sigma|(j)$ but $\sigma(i)>\sigma(j)$, then we have $\sigma(i)<0$ and $\sigma(j)<0$. By the definition of ${\rm Inv}(\pi)$, there is no contribution in this case. 
\end{itemize}

\end{proof}

\begin{remark}
Note that Sagan-Swanson's parameter is indeed different from any of our parameters, as for example the signed permutation $\pi$ appearing in Example  \ref{example of a signed permutation}, we have ${\rm inv}(\pi)=34$ (see Example \ref{example SaSw inv part 1}), whereas we have: ${\rm finv}(\pi)=27$ and ${\rm sfinv}(\pi)=32$ (see Example \ref{example sfinv}(a)).    
\end{remark}

\section{Orthogonality and sum of powers}\label{sec orthogonality}

In this section, we look at two identities involving both kinds of Stirling numbers of type $B$. The first identity is some sort of orthogonality property between the two arrays of numbers (see Graham-Knuth-Patashnik \cite[Table 250]{GKP}), while the second is a Faulhaber sum a.k.a. a power sum, see \cite{GZ05,Knuth93,merca15,merca18}. 
Their analogue to type $A$ appears as Theorems 3.16 and 3.17 in Egge's book \cite{Egge}: 
\begin{thm} \ \\

\begin{itemize}
\item[]  Thm. 3.16:\ \  
$\sum\limits_{j=0}^m (-1)^{j}
s( n+1,n+1 - j ) S (n + m - j,n) =0.$

\item[] Thm. 3.17:\ \ 
$1^m + 2^m + \cdots +n^m=
\sum\limits_{j=1}^m (-1)^{j-1}
j \cdot s( n+1 , n+1 - j) S( n + m - j, n).$
\end{itemize}
\label{Egges thm}
\end{thm}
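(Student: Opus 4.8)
The plan is to recast both identities as specializations at $x_i=i$, $1\le i\le n$, of classical relations among symmetric functions, in line with the symmetric-polynomial method announced for Section~\ref{sec orthogonality}. The first step is to realize the two kinds of Stirling numbers as elementary and complete homogeneous symmetric polynomials in the same variable set $\{1,\dots,n\}$. Factoring $t$ out of $t(t+1)\cdots(t+n)=\sum_k s(n+1,k)t^k$ and expanding $\prod_{i=1}^n(t+i)=\sum_j e_j(1,\dots,n)\,t^{\,n-j}$ gives $s(n+1,n+1-j)=e_j(1,\dots,n)$. Likewise, from the generating function $\sum_{N}S(N,n)\,x^N=x^n\big/\prod_{i=1}^n(1-ix)$ together with $\prod_{i=1}^n(1-ix)^{-1}=\sum_{d}h_d(1,\dots,n)\,x^d$, one reads off $S(N,n)=h_{N-n}(1,\dots,n)$, so that $S(n+m-j,n)=h_{m-j}(1,\dots,n)$.

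With these substitutions, the statement of Thm.~3.16 becomes the value at $x_i=i$ of $\sum_{j=0}^m(-1)^j e_j h_{m-j}$. This is precisely the duality between the elementary and complete homogeneous bases: comparing coefficients of $t^m$ in the identity $E(-t)H(t)=1$, where $E(t)=\prod_i(1+x_i t)$ and $H(t)=\prod_i(1-x_i t)^{-1}$, yields $\sum_{j=0}^m(-1)^je_jh_{m-j}=\delta_{m,0}$, which vanishes for $m\ge1$. Since this holds as a polynomial identity in $x_1,\dots,x_n$, setting $x_i=i$ proves the orthogonality relation.

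For Thm.~3.17 the same substitution turns the right-hand side into the value at $x_i=i$ of $\sum_{j\ge1}(-1)^{j-1}j\,e_j h_{m-j}$, so it suffices to prove the symmetric-function identity $\sum_{j\ge1}(-1)^{j-1}j\,e_jh_{m-j}=p_m$ and then specialize, noting that $p_m(1,\dots,n)=1^m+\cdots+n^m$. I would obtain this by logarithmic differentiation. Writing $P(t)=\sum_i x_i/(1-x_i t)=\sum_{m\ge1}p_m t^{m-1}$, one has $H'(t)=P(t)H(t)$; differentiating $E(-t)H(t)=1$ and substituting gives $\big(\tfrac{d}{dt}E(-t)\big)H(t)=-E(-t)H'(t)=-P(t)$, and since $\sum_{j\ge1}(-1)^{j-1}j\,e_jt^{\,j-1}=-\tfrac{d}{dt}E(-t)$, we conclude $\big(\sum_{j\ge1}(-1)^{j-1}j\,e_jt^{\,j-1}\big)H(t)=P(t)$. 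Comparing coefficients of $t^{m-1}$ gives the required identity.

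The main obstacle is bookkeeping rather than conceptual: one must pin down the index shifts in the two symmetric-polynomial realizations and the sign convention (the paper's $s(n,k)$ is the unsigned, rising-factorial version, which is exactly why the factor $(-1)^j$ is needed to match the signed $e$--$h$ duality), and then carry out the power-sum generating-function computation cleanly. As a self-contained alternative that avoids symmetric functions, one can pass to the signed (falling-factorial) first-kind numbers and observe that each sum equals the coefficient of $t(t-1)\cdots(t-n+1)$ in the Newton expansion of $t^{m-1}\,t(t-1)\cdots(t-n)$, respectively of $t^m\,\frac{d}{dt}\big[t(t-1)\cdots(t-n)\big]$; by the forward-difference formula this coefficient is $\tfrac{1}{n!}\Delta^n$ of the polynomial at $0$, which depends only on its values at $t=0,1,\dots,n$. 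Since $t(t-1)\cdots(t-n)$ vanishes there, the orthogonality sum is $0$, while $\frac{d}{dt}\big[t(t-1)\cdots(t-n)\big]\big|_{t=i}=(-1)^{n-i}\,i!\,(n-i)!$ collapses the alternating sum to $1^m+\cdots+n^m$. I would present the symmetric-function argument as primary, since it matches the paper's stated technique.
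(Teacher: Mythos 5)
Your proposal is correct and takes essentially the same route as the paper: the paper itself only cites Egge for these type-$A$ identities, but its own proof of the type-$B$ analogue (Theorem \ref{orthogonality}) is exactly your method---realizing the first- and second-kind Stirling numbers as specializations of $e_j$ and $h_{m-j}$ (there at $[1]_q,[3]_q,\dots,[2n-1]_q$, here at $1,2,\dots,n$ via Lemmas \ref{specialization in e} and \ref{specialization in h}) and then specializing the $e$--$h$ duality (\ref{convolution}) and the weighted convolution (\ref{weighted convolution}). Your symmetric-function argument is complete as written, so nothing further is needed.
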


The corresponding identities for type $B$ are the main results of this section, which will be proved here using symmetric functions:

\begin{thm}\label{orthogonality}
For all $m, n \geq 1$, we have: 
\begin{enumerate}
\item $
\sum\limits_{j=0}^m (-1)^{j}
ss^B_q(n,n-j)
S^B_q(n+ m - j,n)=0.$

\item $\sum\limits_{j=1}^m (-1)^{j-1}
j \cdot ss^B_q(n,n - j) S^B_q( n+ m - j,n) =[1]_q^m + [3]_q^m + \cdots +[2n-1]_q^m.$
\end{enumerate}

\end{thm}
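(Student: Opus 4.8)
The plan is to prove both identities by specializing symmetric polynomials: I realize the first-kind numbers as elementary symmetric polynomials and the second-kind numbers as complete homogeneous symmetric polynomials in the variables $x_i:=[2i-1]_q$. First I would record the two dictionary entries. Expanding the product in Proposition \ref{generating function of q analogue for the variation}, namely $\prod_{i=1}^{N}\bigl(t+[2i-1]_q\bigr)=\sum_{k}ss^B_q(N,k)\,t^k$, gives $ss^B_q(N,N-j)=e_j(x_1,\dots,x_N)$. Dually, the recursion of Proposition \ref{recursion q version second kind} together with the boundary value $S^B_q(M,0)=1$ matches the standard recursion $h_d(x_1,\dots,x_{k+1})=h_d(x_1,\dots,x_k)+x_{k+1}h_{d-1}(x_1,\dots,x_{k+1})$, yielding $S^B_q(n+m-j,\,n)=h_{m-j}(x_1,\dots,x_{n+1})$.

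With this dictionary, part (1) reduces to the fundamental relation between $e$ and $h$. Writing $E(t)=\prod_i(1+x_i t)$ and $H(t)=\sum_{d\ge0}h_d t^d$, the identity $E(-t)\,H(t)=1$ gives, upon reading the coefficient of $t^m$,
\[
\sum_{j=0}^{m}(-1)^{j}\,e_j\,h_{m-j}=0\qquad(m\ge1).
\]
Substituting the dictionary entries turns the left-hand side into the alternating sum of part (1), which therefore vanishes.

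For part (2) I would differentiate the same relation. The logarithmic derivative of $E(-t)H(t)=1$ shows that the power-sum series $P(t)=\sum_{m\ge1}p_m t^{m-1}$ equals $-\tfrac{d}{dt}\log E(-t)$; a short computation then identifies the generating function $\sum_m\!\bigl(\sum_j(-1)^{j-1}j\,e_j h_{m-j}\bigr)t^m$ with $t\,P(t)$, whence
\[
\sum_{j=1}^{m}(-1)^{j-1}\,j\,e_j\,h_{m-j}=p_m.
\]
Under the dictionary the left-hand side is exactly the weighted alternating sum of part (2), while $p_m(x_1,x_2,\dots)=\sum_i[2i-1]_q^{\,m}$ is the asserted power sum of odd $q$-integers.

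The step requiring the most care---and the main obstacle---is matching the two specializations to a single common alphabet, since the convolution identities above hold only when $e$ and $h$ are formed in the same variables. The first-kind side naturally lives in $[1]_q,\dots,[2N-1]_q$, whereas the second-kind side carries an extra top variable $[2n+1]_q$, forced by the type-$B$ boundary value $S^B_q(M,0)=1$; this is precisely the feature absent from the type-$A$ statements of Theorem \ref{Egges thm}. I would therefore fix the common alphabet to $\{[1]_q,[3]_q,\dots,[2n+1]_q\}$, use this to pin down the exact first argument of $ss^B_q$ and the precise upper range of the power sum, and check the bookkeeping on a small case (e.g.\ $n=1$) before carrying out the general specialization.
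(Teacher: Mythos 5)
Your overall route is exactly the paper's: identify $ss^B_q$ with elementary and $S^B_q$ with complete homogeneous symmetric polynomials in odd $q$-integers, then specialize the convolution identities (\ref{convolution}) and (\ref{weighted convolution}); the paper does this via its Lemmas \ref{specialization in e} and \ref{specialization in h}. Your first dictionary entry, $ss^B_q(N,N-j)=e_j([1]_q,\dots,[2N-1]_q)$, agrees with Lemma \ref{specialization in e}. The crucial point is your second entry: you place $S^B_q(n+m-j,n)$ in the alphabet of $n+1$ variables $[1]_q,[3]_q,\dots,[2n+1]_q$, and you are right to do so. Unrolling the recursion of Proposition \ref{recursion q version second kind} gives $S^B_q(N,K)=h_{N-K}([1]_q,[3]_q,\dots,[2K+1]_q)$; concretely, $S^B_q(2,1)=2+q+q^2=[1]_q+[3]_q=h_1([1]_q,[3]_q)$, not $h_1([1]_q)=1$. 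The paper's Lemma \ref{specialization in h}, which asserts $h_k([1]_q,\dots,[2n-1]_q)=S^B_q(n+k,n)$, is therefore false; the slip in its induction is that the step ``increment of the index $n$'' invokes recursion (\ref{recursion q second kind}) with second argument $n+1$, whose coefficient is $[2n+3]_q$, not the $[2n+1]_q$ used there.

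The consequence, which your ``main obstacle'' paragraph correctly anticipates, is that Theorem \ref{orthogonality} as literally stated cannot be proved, because it is false: for $n=m=1$, part (1) reads
\begin{equation*}
ss^B_q(1,1)\,S^B_q(2,1)-ss^B_q(1,0)\,S^B_q(1,1)=(2+q+q^2)-1=[3]_q\neq 0,
\end{equation*}
and part (2) likewise fails at $n=1$, $m=2$. Carrying out your plan on the common alphabet $\{[1]_q,[3]_q,\dots,[2n+1]_q\}$ yields the correct statements
\begin{equation*}
\sum_{j=0}^m(-1)^j\,ss^B_q(n+1,n+1-j)\,S^B_q(n+m-j,n)=0,
\end{equation*}
\begin{equation*}
\sum_{j=1}^m(-1)^{j-1}\,j\cdot ss^B_q(n+1,n+1-j)\,S^B_q(n+m-j,n)=[1]_q^m+[3]_q^m+\cdots+[2n+1]_q^m,
\end{equation*}
which also restores the analogy with Theorem \ref{Egges thm}, where the first-kind factor is $s(n+1,n+1-j)$ rather than $s(n,n-j)$. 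So your alphabet-matching step is not mere bookkeeping: it is the repair that makes the theorem true, and the $n=1$ sanity check you propose is precisely what detects the paper's off-by-one error. The only thing missing from your write-up is stating the amended identities explicitly rather than deferring them to the final verification.
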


Note that Sagan and Swanson \cite[Cor. 2.7]{SaSw} proved a result, which is similar to part (1) of Theorem \ref{orthogonality} for the case $q=1$.  

\medskip

Recall that for each $k$, the {\it elementary symmetric polynomial}  $e_k(x_1,\dots, x_n)$, the {\it complete homogeneous symmetric polynomial} $h_k(x_1,\dots, x_n)$ and the {\it power sum symmetric polynomial} $p_k(x_1,\dots, x_n)$ are defined as follows (see e.g. Egge \cite{Egge} or Stanley \cite[Section 7]{EC2}): 
$$e_k(x_1, \dots, x_n)=\sum\limits_{1 \leq i_1 < \cdots < i_k\leq n} x_{i_1} \cdots x_{i_k}; \qquad h_k(x_1, \dots, x_n)=\sum\limits_{1 \leq i_1 \leq \cdots \leq i_k\leq n} x_{i_1} \cdots x_{i_k};$$
$$ p_k(x_1,\dots,x_n)=x_1^k +\cdots +x_n^k.$$

It is easily seen that the following recurrence relations hold for the elementary and homogeneous symmetric polynomials (see \cite[Proposition 3.1]{Egge}):
\begin{equation}e_k(x_1, \dots, x_n)=e_k(x_1, \dots, x_{n-1}) +x_n e_{k-1}(x_1, \dots, x_{n-1}),\label{rec_ek}\end{equation}
\begin{equation}h_k(x_1, \dots, x_n)=h_k(x_1, \dots, x_{n-1}) +x_n h_{k-1}(x_1, \dots, x_n).\label{rec_hk}\end{equation}

The ordinary generating functions for the elementary and complete homogeneous symmetric polynomials are respectively:
$$E_n(t)=\sum\limits_{k=0}^{n}e_k(x_1,\dots,x_n) t^k=\prod_{j=1}^{n}(1+x_jt)$$ and 
$$H_n(t)=\sum\limits_{k=0}^{n}h_k (x_1,\dots,x_n)t^k=\prod_{j=1}^{n}\frac{1}{(1-x_jt)},$$ from which we immediately 
conclude that $E_n(-t)$ and $H_n(t)$ are multiplicative inverses, a fact that can also be written for each $m \geq 1$ as 
\begin{equation}\label{convolution}
\sum\limits_{j=0}^m (-1)^j e_j (x_1,\dots,x_n) h_{m-j} (x_1,\dots,x_n)=0, 
\end{equation}
see e.g. Stanley \cite[Equation (7.13)]{EC2}. 

An expression of the power sum polynomials as a weighted convolution of elementary symmetric polynomials and complete homogeneous polynomials is also known. For each $m \geq 1$, we have:
\begin{equation}\label{weighted convolution}
p_m(x_1,\dots,x_n)=\sum\limits_{j=1}^m (-1)^{j-1}j \cdot e_j(x_1,\dots,x_n)h_{m-j}(x_1,\dots,x_n),
\end{equation}
a combinatorial proof of which can be found in Egge \cite[Theorem 3.2]{Egge}. 

\medskip
The following two lemmas, which are obtained by substituting the polynomials $[1]_q,[3]_q,\dots,[2n-1]_q$ in $e_k(x_1,\dots, x_n)$ and in $h_k(x_1,\dots,x_n)$, are the crux points of the proof of Theorem \ref{orthogonality}. 

\begin{lemma}\label{specialization in e}
$$e_k\left([1]_q,[3]_q,\dots, [2n-1]_q\right)=  ss^B_q(n,n-k)  .$$
\end{lemma}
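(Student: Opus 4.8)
I want to show
$$e_k\left([1]_q,[3]_q,\dots,[2n-1]_q\right)=ss_q^B(n,n-k).$$
The plan is to prove this by induction on $n$, matching the well-known recurrence (\ref{rec_ek}) for elementary symmetric polynomials against the Stirling-type recursion (\ref{recursion q shifted stirling B}) for $ss_q^B(n,k)$, after reindexing $k\mapsto n-k$.

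**Main steps.** First I would set $x_n=[2n-1]_q$ and specialize the recurrence (\ref{rec_ek}), obtaining
$$e_k\left([1]_q,\dots,[2n-1]_q\right)=e_k\left([1]_q,\dots,[2n-3]_q\right)+[2n-1]_q\cdot e_{k-1}\left([1]_q,\dots,[2n-3]_q\right).$$
Next I would invoke the induction hypothesis at level $n-1$, which gives $e_k([1]_q,\dots,[2n-3]_q)=ss_q^B(n-1,n-1-k)$ and $e_{k-1}([1]_q,\dots,[2n-3]_q)=ss_q^B(n-1,n-k)$. Substituting these yields
$$e_k\left([1]_q,\dots,[2n-1]_q\right)=ss_q^B(n-1,n-1-k)+[2n-1]_q\cdot ss_q^B(n-1,n-k).$$
The final step is to recognize the right-hand side as the Stirling recursion (\ref{recursion q shifted stirling B}) with parameter $n-k$ in place of $k$: writing $k'=n-k$, this reads $ss_q^B(n,k')=ss_q^B(n-1,k'-1)+[2n-1]_q\cdot ss_q^B(n-1,k')$, which is exactly the claimed equality $e_k(\dots)=ss_q^B(n,n-k)$.

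**Base case and boundary bookkeeping.** I would check the base $n=1$ directly: $e_0([1]_q)=1=ss_q^B(1,1)$ and $e_1([1]_q)=[1]_q=1=ss_q^B(1,0)$ (since $[1]_q=1$), both in agreement. More care is needed at the boundary values of $k$ where the recursion (\ref{recursion q shifted stirling B}) does not directly apply: for $k=0$ one has $e_0=1=ss_q^B(n,n)$, and for $k=n$ the identity reduces to $e_n([1]_q,\dots,[2n-1]_q)=[1]_q[3]_q\cdots[2n-1]_q=ss_q^B(n,0)$, which is precisely the first boundary condition of $ss_q^B$; these must be verified separately so that the inductive step is valid across the whole range $0\le k\le n$.

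**Anticipated obstacle.** The substantive work is not in the algebra — which is a near-mechanical recurrence match — but in the careful handling of the index reversal and the boundary cases. The reindexing $k\mapsto n-k$ flips the roles of the two boundary conditions of $ss_q^B(n,k)$, so I expect the main delicate point to be confirming that the elementary-symmetric boundary values ($e_0=1$ at one end, $e_n=\prod[2i-1]_q$ at the other) line up with $ss_q^B(n,n)=1$ and $ss_q^B(n,0)=[1]_q[3]_q\cdots[2n-1]_q$ respectively, and that the recursion genuinely covers all intermediate $k$ once the endpoints are pinned down.
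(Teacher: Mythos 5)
Your proof is correct and follows essentially the same route as the paper: induction on $n$, specializing the recurrence (\ref{rec_ek}) at $x_n=[2n-1]_q$ and matching it, after the reindexing $k\mapsto n-k$, against the recursion (\ref{recursion q shifted stirling B}), with the endpoints checked separately. The only cosmetic difference is in the boundary bookkeeping: the paper settles the $k=n$ case by appealing to Proposition \ref{generating function of q analogue for the variation}, which is exactly what underlies the boundary condition $ss^B_q(n,0)=[1]_q[3]_q\cdots[2n-1]_q$ that you invoke, so the two treatments coincide.
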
 

\begin{proof}
We prove it by induction on $n$, letting $0 \leq k \leq n$.

For $n=1$ and $k=0$, the left hand side is $e_0(1)=1$ and the right hand side is $ss^B_q(1,1) =1$, thanks to the unique signed permutation having a non-split cycle $(1,-1) \in B_1$, presented by the RG-word of type $B$ of the first kind $\omega=\omega_1= (-1,1)$ (see Definition \ref{def_RGB_kind1}) having\break ${\rm sfinv}(\omega)=2\cdot 0 
+0+0=0$ (see Definition \ref{def sfinv}).

For $n=k=1$, the left hand side is $e_1(1)=1$ and the right hand side is $ss^B_q(1,0)=1$, due to the unique signed permutation having a split cycle $(1)(-1)$, presented by the RG-word $\omega=\omega_1=(1,1)$, having 
${\rm sfinv}(\omega)=2\cdot 0 +0 +0=0$.  

Finally, for the case $k=n$, note that by substituting $k=0$ in Equation (\ref{eqn generating function of q analogue for the variation}), we have that:
$$e_n\left([1]_q,[3]_q,\dots, [2n-1]_q\right)=\prod_{i=0}^{n-1}[2i+1]_q \stackrel{\rm  (\ref{eqn generating function of q analogue for the variation})}{=}ss^B_q(n,0)=ss^B_q(n,n-n).$$

Now assume that the claim holds for $n-1 \geq 1$ and we prove it for $n$, where $k$ satisfies $0\leq k < n$:
\begin{eqnarray*}
 & \hspace{-305pt} e_k\left([1]_q,[3]_q,\dots ,[2n-1]_q\right)  \stackrel{(\ref{rec_ek})}{=} & \ \\
& \hspace{-62pt}= \hspace{12pt} e_k\left([1]_q,[3]_q,\dots ,[2n-3]_q\right)+[2n-1]_q e_{k-1}\left([1]_q,[3]_q,\dots ,[2n-3]_q\right)=&\\
& \hspace{-30pt}\stackrel{\rm induction}{=} 
ss^B_q(n-1,(n-1)-k)+[2n-1]_q ss^B_q(n-1,(n-1)-(k-1))
\stackrel{(\ref{recursion q shifted stirling B})}{=}
ss^B_q(n,n-k).
\end{eqnarray*}

\end{proof}

\begin{lemma}\label{specialization in h}
$$h_k \left([1]_q,[3]_q,\dots, [2n-1]_q\right)= S^B_q(n+k,n).$$
\end{lemma}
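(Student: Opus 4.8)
The plan is to prove Lemma \ref{specialization in h} by induction on $n$, mirroring the structure of the proof of Lemma \ref{specialization in e} but using the recursion (\ref{rec_hk}) for the complete homogeneous symmetric polynomials together with the $q$-Stirling recursion (\ref{recursion q second kind}) for the second kind. First I would set up the induction with the base and boundary cases: for $k=0$ we have $h_0=1$ on the left and $S^B_q(n,n)=1$ on the right (by the boundary condition in Proposition \ref{recursion q version second kind}), and for $n=1$ we would check directly that $h_k([1]_q)=1=S^B_q(1+k,1)$, which also follows from the boundary values.

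For the inductive step, the key computation is to apply the recurrence (\ref{rec_hk}) in the variable $x_n=[2n-1]_q$, namely
\begin{equation*}
h_k\left([1]_q,\dots,[2n-1]_q\right)=h_k\left([1]_q,\dots,[2n-3]_q\right)+[2n-1]_q\, h_{k-1}\left([1]_q,\dots,[2n-1]_q\right).
\end{equation*}
The subtle point to watch is that (\ref{rec_hk}) keeps all $n$ variables in the second term on the right (unlike (\ref{rec_ek}) for the elementary case), so one term reduces the number of variables while the other reduces the degree. By the induction hypothesis on $n$, the first summand equals $S^B_q((n-1)+k,n-1)$; and by the induction hypothesis on $k$ (at the full set of $n$ variables), the second summand equals $[2n-1]_q\, S^B_q(n+k-1,n)$.

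I would then reconcile these two with the recursion (\ref{recursion q second kind}). Writing $N=n+k$, the target is $S^B_q(N,n)$, and the Stirling recursion gives $S^B_q(N,n)=S^B_q(N-1,n-1)+[2n+1]_q\,S^B_q(N-1,n)$, whereas my computation produced $S^B_q(N-1,n-1)+[2n-1]_q\,S^B_q(N-1,n)$. The main obstacle is precisely this index mismatch between $[2n-1]_q$ and $[2n+1]_q$, which signals that a naive single-variable induction as above does not close directly; the correct approach is instead a double induction that peels off the \emph{largest} variable and carefully tracks which index of the second-kind recursion is being invoked. I expect the cleanest route is to induct on $k$ with $n$ fixed, using (\ref{rec_hk}) to isolate $h_{k-1}$ at the same $n$ variables together with a lower-degree term, and to match this against (\ref{recursion q second kind}) read as $S^B_q(n+k,n)=S^B_q(n+k-1,n-1)+[2n+1]_q S^B_q(n+k-1,n)$ after re-indexing, so the hardest part of the write-up will be verifying that the homogeneous-polynomial recurrence and the second-kind Stirling recurrence genuinely align once the variable $x_{n+1}=[2(n+1)-1]_q=[2n+1]_q$ is the one being added. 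Once that alignment is confirmed, the induction closes and the lemma follows.
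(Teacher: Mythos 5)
Your proposal stalls at exactly the right place, but draws the wrong conclusion from it. The mismatch you found between the coefficient $[2n-1]_q$ produced by (\ref{rec_hk}) and the coefficient $[2n+1]_q$ demanded by (\ref{recursion q second kind}) is not a symptom of a badly organized induction that a ``double induction'' could cure; it is a genuine obstruction, and no re-indexing will ever make the stated identity close. Indeed the lemma as printed is already false at $n=k=1$: the left-hand side is $h_1([1]_q)=[1]_q=1$, while the right-hand side is $S^B_q(2,1)=S^B_q(1,0)+[3]_q\,S^B_q(1,1)=1+[3]_q$. The identity that is actually true --- and which your inductive computation proves verbatim --- uses $n+1$ variables,
\begin{equation*}
h_k\left([1]_q,[3]_q,\dots,[2n+1]_q\right)=S^B_q(n+k,n),
\end{equation*}
equivalently $h_k\left([1]_q,\dots,[2n-1]_q\right)=S^B_q(n+k-1,n-1)$. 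With this restatement your step closes with no friction: peeling off the largest variable $[2n+1]_q$ via (\ref{rec_hk}) gives $S^B_q(n+k-1,n-1)+[2n+1]_q\,S^B_q(n+k-1,n)$, which equals $S^B_q(n+k,n)$ by (\ref{recursion q second kind}) with the pair $(n+k,n)$. So the step you deferred (``once that alignment is confirmed'') can never be confirmed for the printed statement; the fix is to change the statement, not the induction.

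For comparison, the paper's own proof is a double induction on $n+k$ with two cases (incrementing $n$, incrementing $k$) --- structurally what you envisioned --- but it silently commits precisely the off-by-one you caught: in its first case it asserts $S^B_q(n+k,n)+[2n+1]_q\,S^B_q(n+k,n+1)=S^B_q(n+k+1,n+1)$, although (\ref{recursion q second kind}) requires the coefficient $[2(n+1)+1]_q=[2n+3]_q$ there, and in its second case it uses $[2n-1]_q$ where $[2n+1]_q$ is required. So your instinct that the homogeneous recurrence and the second-kind Stirling recurrence fail to align at these indices is correct, and is in fact a valid criticism of the paper's argument; the error propagates to Theorem \ref{orthogonality}, whose identities hold only after replacing $S^B_q(n+m-j,n)$ by $S^B_q(n+m-j-1,n-1)$ (for instance, part (1) fails at $q=1$, $m=n=1$, where it reads $1\cdot S^B(2,1)-1\cdot S^B(1,1)=4-1\neq 0$). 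Your error is only the unsupported final claim that ``the induction closes and the lemma follows''; had you pushed your own counterexample-hunting one step further, you would have found that it cannot.
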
 

\begin{proof}
We prove it by induction on $n+k$, where $0 \leq k \leq n$.
For $n+k=1$ we have to check only the case $n=1$ and $k=0$. 
The left hand side is $h_0(1)=1$, while the right hand side is $S^B_q( 1, 1)=1$, due to the unique set partition of type $B$ of the set $\{1\}$, having exactly one non-zero block:  $\{\{1\},\{-1\}\}$,
which contributes ${\rm wt}_1(\omega)=1$ (see Definition \ref{def q stirling second kind}).

Now assume that the claim holds for $n+k\geq 1$ and we prove it for $n+k+1$ (where $0\leq k \leq n$). 
We split our proof into two cases
according to the index we are increasing.
\medskip

\noindent
{\bf First case - Increment of the index $n$}:
\begin{eqnarray*}
 & \hspace{-280pt} h_k\left([1]_q,[3]_q,\dots ,[2n+1]_q\right)  \stackrel{(\ref{rec_hk})}{=} & \ \\
& \hspace{-23pt}= \hspace{12pt} h_k\left([1]_q,[3]_q,\dots ,[2n-1]_q\right)+[2n+1]_q h_{k-1}\left([1]_q,[3]_q,\dots ,[2n+1]_q\right)=&\\
& \hspace{-68pt}\stackrel{\rm induction}{=} 
S^B_q(n+k,n)+[2n+1]_q S^B_q(n+k,n+1)
\stackrel{(\ref{recursion q second kind})}{=}
S^B_q(n+k+1,n+1).
\end{eqnarray*}

\medskip

\noindent
{\bf Second case - Increment of the  index $k$:}
\begin{eqnarray*}
 & \hspace{-280pt} h_{k+1}\left([1]_q,[3]_q,\dots ,[2n-1]_q\right)  \stackrel{(\ref{rec_hk})}{=} & \ \\
& \hspace{-10pt}= \hspace{12pt} h_{k+1}\left([1]_q,[3]_q,\dots ,[2n-3]_q\right)+[2n-1]_q h_k\left([1]_q,[3]_q,\dots ,[2n-1]_q\right)=&\\
& \hspace{-75pt}\stackrel{\rm induction}{=} 
S^B_q(n+k,n-1)+[2n-1]_q S^B_q(n+k,n)
\stackrel{(\ref{recursion q second kind})}{=}
S^B_q(n+k+1,n).
\end{eqnarray*}

\end{proof}

\begin{remark}
Note that the case $q=1$ of Lemmas  \ref{specialization in e} and \ref{specialization in h} appeared as Theorem 2.1(a)-(b) in Sagan and Swanson \cite{SaSw}.
\end{remark}

We are now in position to prove Theorem \ref{orthogonality}:
\begin{proof}[Proof of Theorem \ref{orthogonality}]
The result follows by Lemmas \ref{specialization in e} and \ref{specialization in h}, specializing Equations (\ref{convolution}) and (\ref{weighted convolution}) at $[1]_q,[3]_q,\dots,[2n-1]_q$. \end{proof}

\section*{Acknowledgements}
We thank Bruce Sagan and Josh Swanson for many fruitful discussions.

\end{document}